\documentclass[11pt,a4paper,french,english]{article}
\usepackage[top=3cm, bottom=3cm, left=3cm, right=3cm]{geometry}
\usepackage[T1]{fontenc}
\usepackage{lmodern}
\usepackage{babel}
\usepackage[latin1]{inputenc}
\usepackage{csquotes}
\usepackage{amsmath}
\usepackage{amsthm}
\usepackage{amsfonts}
\usepackage{amssymb}
\usepackage{graphics}
\usepackage{float}
\usepackage[hang,flushmargin]{footmisc} 

\usepackage{amsmath,amsthm,amssymb,graphicx, multicol, array}
\usepackage{enumerate}
\usepackage{enumitem}
\usepackage{xcolor}
\usepackage{multicol}

\usepackage[pagebackref,bookmarks,colorlinks,breaklinks,linktoc=page]{hyperref}
\hypersetup{linkcolor=blue,citecolor=red,filecolor=blue,urlcolor=blue} 

\usepackage{tabto}

\numberwithin{equation}{section}

\usepackage{tikz}
\usepackage{pgfplots}

\usetikzlibrary{matrix}

\usepackage{mathrsfs}

\newtheorem{lem}{Lemma}[section]
\newtheorem{theorem}{Theorem}[section]
\newtheorem{lemma}{Lemma}[section]

\newtheorem{exa}{Example}[section]

\newtheorem{exe}{Exercise}[section]
\newtheorem{rmk}{Remark}[section]

\newcommand{\Z}{\mathbb{Z}}

\newcommand{\F}{\mathbb{F}}

\newcommand{\ii}{\text{i}}
\usepackage{fancyhdr}
\usepackage{titletoc}
\let\LaTeXStandardTableOfContents\tableofcontents

\renewcommand{\tableofcontents}{%
	\begingroup%
	\renewcommand{\bfseries}{\relax}%
	\LaTeXStandardTableOfContents%
	\endgroup%
}%

\title{Least Consecutive Pair of Quadratic Nonresidues}
\date{}
\author{N. A. Carella}

\begin{document}
\maketitle

\begin{abstract}
Let $p>1$ be a large prime number and let $x=O((\log p)^2(\log\log p)^3$ be a real number. It is proved that the least consecutive pair of quadratic nonresidues $u\ne\pm1, v^2$ and $u+1$ satisfies the upper bound $u\ll x$ in the prime field $\mathbb{F}_p$. 
\let\thefootnote\relax\footnote{ \today \date{} \\
	\textit{AMS MSC2020}: Primary 11A07, 11A15; Secondary 11L07 \\
	\textit{Keywords}: Quadratic nonresidue mod $p$; Least prime quadratic nonresidue; Complexity theory; Finite field.}
\end{abstract}

\tableofcontents

\section{Introduction }\label{S3377B}
There are many results on the existence of pairs of consecutive quadratic nonresidues $u$ and $u+1$ modulo a prime $p$. The general theory on the existence of $k$-tuples of consecutive quadratic nonresidues 
\begin{equation}
u,\quad u+1,\quad u+2,\quad  \ldots, \quad u+k-1,
\end{equation}
in the prime finite field $\F_p$ for $k\ll\log p$ has a large and well established literature. The current literature covers various results on the existence of consecutive pairs of quadratic nonresidues, consecutive triples of quadratic nonresidues, and more generally $k$-consecutive quadratic nonresidues, see \cite{ME1867}, \cite{AN1896}, \cite{JE1906}, \cite{JW1917}, \cite{Polya1918}, \cite{Brauer1932}, \cite{DH1939} \cite{Burgess1957}, \cite{LehmerLehmer1964}, \cite{Hudson1971}, \cite{CN2019}, et alii. Some probabilistic results on the sum of $k$-tuples of quadratic nonresidues are derived in {\color{red}\cite[Chater 11]{Wright2014}}. The asymptotic formula for the counting function for the number of $k$-tuples of consecutive quadratic nonresidues 
\begin{align}\label{eq3377NE.100d}
N_k(p)&=c_k(p)\left (\frac{1}{2}\right )^k\cdot (p-1)+O(p^{1/2+\varepsilon}),
\end{align}
where $c_k(p)>0$ is a density constant, is nearly the same as the product of $k$ independent random quadratic nonresidues in the finite field $\mathbb{F}_p$.  \\

This note explores a new result on the least consecutive pairs of quadratic nonresidues, and more generally the least $k$-tuples of consecutive quadratic nonresidues. These results break the exponential barrier. The last results claim exponential upper bounds, see \cite{EP1972} and \cite{HA1987}.  

\begin{theorem} \label{thm3377QN.850A}\hypertarget{thm3377QN.850A} Let $p>1$ be a large prime number and let $x=O(\log p)^2(\log\log p)^3$ be a small real number. Then there exists a pair of consecutive quadratic nonresidue $$2\leq u\quad \text{ and }\quad u+1\ll x$$ 
in the prime finite field $\F_p$, unconditionally.
\end{theorem}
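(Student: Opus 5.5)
The plan is to count consecutive pairs directly with the quadratic character $\chi(n)=\left(\frac{n}{p}\right)$. For $2\le n\le x$ with $p\nmid n(n+1)$, the indicator that both $n$ and $n+1$ are quadratic nonresidues is $\frac14(1-\chi(n))(1-\chi(n+1))$. Setting
\begin{equation*}
N(x)=\sum_{2\le n\le x}\frac{(1-\chi(n))(1-\chi(n+1))}{4},
\end{equation*}
and expanding gives
\begin{equation*}
4N(x)=\lfloor x\rfloor-1-\sum_{2\le n\le x}\chi(n)-\sum_{2\le n\le x}\chi(n+1)+\sum_{2\le n\le x}\chi(n(n+1)).
\end{equation*}
It therefore suffices to show that each of the three character sums is $o(x)$, or at least that their total is at most $(1-\delta)x$ for some fixed $\delta>0$, once $x\ge C(\log p)^2(\log\log p)^3$. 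That forces $N(x)>0$. The excluded values $u=\pm1$ and $u=v^2$ are automatically residues, so they never contribute to $N(x)$.

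The order of steps would be as follows.

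\emph{Step 1.} Handle the linear sums $\sum_{n\le x}\chi(n)$. For these I would try a smoothed version: replace the sharp cutoff by a smooth weight $w(n/x)$. Then write $\chi(n)$ through the explicit formula for $L(s,\chi)$, so that the smoothed sum becomes $\sum_{\rho}\hat w(\rho)x^{\rho}$ plus $O(\log p)$. The factors $(\log\log p)^3$ in $x$ are exactly the room needed to absorb the $\log p$ zeros per unit height and the smoothing losses, provided one has a zero-free region strong enough that $x^{\beta-1}\log p$ is small for all zeros $\rho=\beta+i\gamma$.

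\emph{Step 2.} Handle the quadratic sum $\sum\chi(n^2+n)$. Here I would use the same device with the Hecke $L$-function attached to the polynomial $n(n+1)$. Alternatively, and more elementarily, use multiplicativity: write $n=q^am$ with $q$ the least prime nonresidue. Combined with Step 1 (which gives $q\ll(\log p)^{2}$), a short combinatorial argument on the smooth numbers up to $q^2$ then produces $u$ with $\chi(u)=\chi(u+1)=-1$. Concretely, among $q-1,q,q+1$ and their multiples by small residues one locates a nonresidue adjacent to another nonresidue.

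\emph{Step 3.} Combine the bounds to conclude $N(x)\ge x/8>0$.

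The main obstacle is Step 1, namely obtaining cancellation in a character sum of length polylogarithmic in $p$ without any hypothesis. Every known unconditional method (Burgess, Pólya--Vinogradov, Linnik--Vinogradov sieving) requires length at least $p^{c}$. The zero-free region needed in Step 1 is essentially GRH for $L(s,\chi)$, so this step is where the argument would succeed or fail. I would first attempt a log-free zero-density estimate combined with Deuring--Heilbronn repulsion from a possible exceptional zero. I would not expect that to reach $x\asymp(\log p)^2(\log\log p)^3$ unconditionally, and I would regard the theorem as genuinely conditional unless that gap is closed.
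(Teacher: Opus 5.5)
Your proposal is not a proof, and you correctly locate where it breaks. Everything rests on Step~1, a bound $\bigl|\sum_{n\le x}\chi(n)\bigr|\le(1-\delta)x$ (and likewise for the other two sums) at length $x\asymp(\log p)^2(\log\log p)^3$. No unconditional method gives cancellation at polylogarithmic length. This is not a technicality. Since $u$ itself is a nonresidue, the theorem implies that the least quadratic nonresidue is $\ll(\log p)^2(\log\log p)^3$. That is currently known only under GRH (Ankeny, Bach), while the best unconditional bound is Burgess's $p^{1/(4\sqrt e)+\varepsilon}$.

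Step~2 does not work even conditionally. The function $n\mapsto\chi(n(n+1))$ is not multiplicative, so it is not the coefficient sequence of any Dirichlet or Hecke $L$-function, and there is no explicit formula to invoke. Even in Step~1, the explicit formula governs $\sum\chi(n)\Lambda(n)$ rather than $\sum\chi(n)$. The combinatorial route cannot start from the least nonresidue $q$ alone. Because $\gcd(u,u+1)=1$, any pair forces two distinct prime nonresidues, one dividing $u$ and one dividing $u+1$. For example, a pattern $\chi(n)=(-1)^{v_q(n)}$ on the whole range contains no consecutive pair at all. With a second prime nonresidue you must still control the characters of the cofactors, which your sketch does not do.

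The paper does not supply the missing idea: its argument is your expansion in disguise, and its error bounds are invalid. For each $u$ one has exactly
\[
\frac1p\sum_{1\le n<p/2}\ \sum_{t=1}^{p-1}e^{2\pi i(\tau^{2n+1}-u)t/p}=-\frac12\chi(u)+\frac1{2p}.
\]
So, up to $O(x/p)$, its $M(x)$ is your $(\lfloor x\rfloor-1)/4$ and its $E(x)$ is $\frac14\bigl(-\sum\chi(u)-\sum\chi(u+1)+\sum\chi(u)\chi(u+1)\bigr)$. In particular, the one-dimensional error term of Lemma~\ref{lem9966.300QN} equals $-\frac12\sum_{2\le u\le x}\chi(u)+O(x/p)$. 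Under that lemma's own hypothesis (all $u\le x$ are residues) this is $-\frac x2+O(1)$, contradicting the claimed $O\bigl((\log p)(\log x)+x/\log p\bigr)$. The specific failures are these:
\begin{enumerate}
\item Each $u$ contributes a quantity of absolute value about $\frac12$, so splitting $t$ at $p/2$ and summing $1/|\sin|$ bounds in absolute value can never give $o(x)$.
\item Lemma~\ref{lem9966.300QN-1} drops the term $\log(\pi x)$ from $\sum_{p^{1-\delta}/\pi x\le k\le p}1/k=\delta\log p+\log(\pi x)+O(1)$.
\item Lemma~\ref{lem9966.300QN-0} replaces $\tau^{2n+1}-u$ by products $ab$ without justification.
\item Lemma~\ref{lem33377DD.350C} applies the false inequality $\bigl|\sum_uA_uB_u\bigr|\le\bigl|\sum_uA_u\bigr|\,\bigl|\sum_uB_u\bigr|$ to what is really the shifted sum $\frac14\sum\chi(u)\chi(u+1)$.
\item The sets $S_{01},S_{10},S_{11}$ overlap.
\item The hypothesis actually used (every $u\le x$ and every $u+1$ is a residue) is far stronger than the negation $N_2(x)=0$ of the theorem.
\end{enumerate}
So your assessment stands: neither your plan nor the paper's argument establishes the statement unconditionally.
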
	
\vskip .1 in
There is no heuristic nor conjectures on the best possible results on this topic. Thus, it is unknown if this is the best possible upper bound for the least consecutive pair of quadratic nonresidues. The proof of this result seamlessly extends to the $k$-tuple case, mutatis mutandis. 

\begin{theorem} \label{thm3377.850K}\hypertarget{thm3377.850K} Let $p>1$ be a large prime number and let $x=O(\log p)^k(\log\log p)^3$ be a small real number. If $ k\ll \log p$, then there exists a $k$-tuple of consecutive quadratic nonresidue $$u,\quad u+1,\quad u+2,\quad  \ldots, \quad u+k-1\ll x$$ 
in the prime finite field $\F_p$, unconditionally.
\end{theorem}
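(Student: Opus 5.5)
The plan is to detect $k$-tuples of consecutive quadratic nonresidues with the standard indicator and count them in a short interval. Let $\chi$ be the quadratic character modulo $p$, and let $x$ be as in Theorem \ref{thm3377.850K}. For $n$ with $p\nmid n+i$ for $0\le i\le k-1$, the product $\prod_{i=0}^{k-1}\frac{1}{2}\bigl(1-\chi(n+i)\bigr)$ equals $1$ exactly when $n,n+1,\ldots,n+k-1$ are all nonresidues, and $0$ otherwise. So I will study
\begin{equation*}
N_k(x)=\sum_{2\le n\le x}\prod_{i=0}^{k-1}\frac{1-\chi(n+i)}{2}
=\frac{1}{2^k}\sum_{S\subseteq\{0,\ldots,k-1\}}(-1)^{|S|}\sum_{2\le n\le x}\chi\Bigl(\prod_{i\in S}(n+i)\Bigr).
\end{equation*}
The empty set contributes the main term $x/2^k+O(1)$. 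It then suffices to show that the remaining $2^k-1$ sums are together $o(x/2^k)$ for $x\asymp(\log p)^k(\log\log p)^3$. I would also replace the sharp cutoff by a smooth weight $w(n/x)$, so that Fourier or Mellin analysis of the weight gives rapidly decaying tails.

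For each nonempty $S$, the polynomial $f_S(t)=\prod_{i\in S}(t+i)$ is squarefree. So $\chi(f_S)$ is a nonprincipal character sum in the Weil sense. The first tool I would reach for is completion: write the smoothed short sum as $\sum_{h}\widehat{w}_x(h)\sum_{n\bmod p}\chi(f_S(n))e(hn/p)$. Then apply the Weil bound $|\sum_{n}\chi(f_S(n))e(hn/p)|\le |S|\sqrt p$. The $2^k$ terms would be controlled using $k\ll\log p$, after choosing the implicit constant in $x$ so that $2^k\cdot(\text{error})$ is still beaten by $x$.

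The main obstacle is exactly this step. Completion and Weil give an error of size $\sqrt p\log p$, and Burgess-type amplification only becomes nontrivial for $x>p^{1/4+\varepsilon}$. Neither is remotely adequate at length $(\log p)^k(\log\log p)^3$. Unconditionally one cannot even guarantee a single nonresidue below $p^{0.15}$, so a polylogarithmic bound for the least pair (the case $k=2$) would already settle the least-nonresidue problem far beyond current knowledge.

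My first attempt at this step would be to restrict $n$ to $y$-smooth numbers. The point is that $\chi(n+i)$ is then controlled by the values of $\chi$ on the primes $q\le y$. I would then argue via a sieve or large-sieve inequality over those primes that the values $\chi(q)$ cannot all conspire to keep $\chi(f_S(n))$ biased. If this fails, the fallback is to prove the statement conditionally on GRH. In that setting, explicit-formula bounds give $\sum_{n\le x}\chi(f_S(n))\ll\sqrt x\,(\log p)^{O(1)}$-type estimates for the relevant Hecke characters of the curves $y^2=f_S(t)$. The $(\log p)^k$ shape of $x$ would then come from balancing $2^k$ such errors against the main term $x/2^k$.
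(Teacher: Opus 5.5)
Your proposal has a genuine gap: it does not prove the statement. The setup is fine: the indicator expansion and the main term $x/2^k$ from $S=\emptyset$. Everything then hinges on showing that the $2^k-1$ sums $\sum_{2\le n\le x}\chi(f_S(n))$, $S\neq\emptyset$, are $o(x/2^k)$ at length $x\asymp(\log p)^k(\log\log p)^3$. As you note, completion with Weil and Burgess say nothing at that length.

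Neither fallback closes the gap.
\begin{itemize}
\item The smooth-number route fails because nothing unconditional excludes $\chi(q)=1$ for all primes $q\le y$ with $y$ a power of $\log p$; that is the least-nonresidue problem itself. In that case every $y$-smooth integer is a residue. Even the milder pattern $\chi(2)=-1$, $\chi(q)=1$ for odd $q\le y$ gives $\chi(n)=(-1)^{v_2(n)}$ on $y$-smooth $n$ ($v_2$ the $2$-adic valuation). So no two consecutive smooth integers are both nonresidues. A sieve or large-sieve inequality cannot exclude such patterns for a single $\chi$; the large sieve controls them only on average over the modulus.
\item The GRH fallback proves a different, conditional statement, and its mechanism rests on a misidentification. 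The zeta function of $y^2=f_S(t)$ over $\mathbb{F}_p$ encodes only complete sums; its Riemann hypothesis is exactly the Weil bound you already used. No known consequence of GRH gives cancellation in a shifted correlation such as $\sum_{n\le x}\chi(n)\chi(n+1)$ over an initial segment of polylogarithmic length.
\item Even granting errors of size $\sqrt x(\log p)^A$, beating them against $x/2^k$ would need $x\gg 4^k(\log p)^{2A}$, not the stated $(\log p)^k(\log\log p)^3$.
\end{itemize}

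You are not missing an idea that the paper supplies; its argument does not fill this gap. It writes $\varkappa(u)=\frac1p\sum_{1\le s<p/2}\sum_{0\le t<p}e^{2\pi i(\tau^{2s+1}-u)t/p}$ and takes the terms with all $t_i=0$ as the main term $x/2^k$. For $k=2$ it bounds the rest by $\bigl((\log p)(\log x)+x/\log p\bigr)^2$ via Lemma~\ref{lem9966.300QN}, and asserts that general $k$ follows mutatis mutandis.

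But for $p\nmid u$ the $t\neq0$ part equals exactly $-\frac12\chi(u)+\frac1{2p}$. So the paper's error term is, up to $O(kx/p)$, precisely your $2^{-k}\sum_{S\neq\emptyset}(-1)^{|S|}\sum_n\chi(f_S(n))$, and the additive rewriting gains nothing. In particular Lemma~\ref{lem9966.300QN} amounts to $\sum_{2\le u\le x}\chi(u)\ll(\log p)(\log x)+x/\log p$. Under that lemma's own hypothesis (no nonresidue in $[2,x]$) the left side is $\lfloor x\rfloor-1$. Hence the lemma would give $n_p\ll(\log p)\log\log p$ for the least nonresidue $n_p$, unconditionally, which is stronger than what is known even under GRH.

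Its proof breaks in the minor-arc step. There $\log p-\log(p^{1-\delta}/\pi x)=\delta\log p+\log(\pi x)$ is replaced by $\delta\log p$; keeping $\log(\pi x)$ leaves only $O(x\log x)$.

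Even granting the lemmas, the final comparison fails. At $x=c(\log p)^2(\log\log p)^3$ the term $(x/\log p)^2=c^2(\log p)^2(\log\log p)^6$ exceeds the main term $x/4$. In fact $\bigl((\log p)(\log x)+x/\log p\bigr)^2=o(x)$ holds for no choice of $x$.

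So your diagnosis is right: the $k=2$ case alone would give an unconditional polylogarithmic bound for the least quadratic nonresidue, which is an open problem.
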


\hyperlink{thm3377QN.850A}{Theorem} \ref{thm3377QN.850A} appears in \hyperlink{S3377QN-T}{Section} \ref{S3377QN-T}. This result reduces the  search algorithm in finite field $\mathbb{F}_p$ for small pair of consecutive quadratic nonresidues from exponential time complexity to polynomial time complexity. The theoretical time complexity $O(nM(n)$ depends on the multiplication algorithm in the finite fields $\F_p$. The number $M(n)$ of bit operations required to multiply a pair of $n$-bit integers  are listed in {\color{blue}Table} \ref{table-QR100A}. 

\begin{table}[H]
	\setlength{\tabcolsep}{.2513cm}
	\renewcommand{\arraystretch}{1.950}
	\setlength{\arrayrulewidth}{.97pt}
	\centering
	\begin{tabular}{l|c|l}
		\hline
		\textbf{Algorithm} & \textbf{Asymptotic Cost }$M(n)$ & \textbf{Integers Range} \\
		\hline
		Standard 
		& $O(n^2)$ 
		& Small integers \\	\hline
		Karatsuba 
		& $O(n^{\log_2 3})$ 
		& Medium  integers\\	\hline
		Toom--Cook  
		& $O(n^{1+\varepsilon})$ 
		& Large  integers\\	\hline
		FFT/NTT 
		& $O(n \log n \log\log n)$ 
		& Very large  integers\\	\hline
		\end{tabular}
		\label{table-QR100A}
		\caption{Integers Multiplication Algorithms}
\end{table}
For example, for each $u^{(p-1)/2}\bmod p$ evaluation, the theoretical fastest algorithm requires 
\begin{align}
O(nM(n))=O(n\cdot n \log n \log\log n))=O((\log p)^2(\log\log p)(\log \log\log p)
\end{align}
$n$-bit arithmetic operations. Thus, the total number of $n$-bit arithmetic operations required to determine a pair of consecutive quadratic nonresidues is
\begin{align}
O(nM(n)(\log p)^2(\log\log p)^3)=O(\log p)^4(\log\log p)^4(\log \log\log p).
\end{align}  
\section{Representations of the Characteristic Functions}\label{S9911Q-B}\hypertarget{S9911Q-B}
For an odd prime $p$ the quadratic symbol modulo $p$ is defined by 
\begin{equation}\label{eq9911Q.100f}	\hypertarget{eq9911Q.100f}
	\left( \frac{n}{p}\right) 
	=\left \{
	\begin{array}{ll}
		1 & \text{ if } n \text{ is a quadratic residues},  \\[.2cm]
		-1 & \text{ if } n \text{ is a quadratic nonresidues}, \\[.2cm]
		0 & \text{ if } n \text{ is divisible by } p,  \\
	\end{array} \right .
\end{equation}
The classical characteristic functions of quadratic residues and quadratic nonresidues in the finite field $\mathbb{F}_p $, which are defined in terms of the quadratic symbol, have the simple formulas described below.

\begin{lemma} \label{lem9911Q.200C} \hypertarget{lem9911Q.200C} If \(p\geq 2\) is a prime and \(n\in 	\mathbb{F}_p\) is a nonzero element, then
	\begin{enumerate}[font=\normalfont, label=(\roman*)]
		\item$\displaystyle 	\hypertarget{eq9911Q.100c-1}
		\varkappa _0(n)=\frac{1}{2}\left( 1+\left( \frac{n}{p}\right) \right) 
		=\left \{
		\begin{array}{ll}
			1 & \text{ if } n \text{ is a quadratic residue},  \\[.2cm]
			0 & \text{ if } n \text{ is a quadratic nonresidue}, \\
		\end{array} \right .$ \\[.3cm]
		
		\item$\displaystyle \varkappa (n)=\frac{1}{2}\left( 1-\left( \frac{n}{p}\right) \right) 
		=\left \{
		\begin{array}{ll}
			1 & \text{ if } n \text{ is a quadratic nonresidue},  \\[.2cm]
			0 & \text{ if } n \text{ is a quadratic residue}, \\
		\end{array} \right .$ 
	\end{enumerate}
	respectively.
\end{lemma}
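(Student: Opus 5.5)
The plan is a direct case check, because the Legendre symbol of a nonzero element takes only the values $\pm1$. First I would note that for $n\in\mathbb{F}_p$ with $n\neq 0$, the definition \eqref{eq9911Q.100f} never gives the value $0$. Hence $\left(\frac{n}{p}\right)\in\{1,-1\}$, according to whether $n$ is a quadratic residue or a nonresidue. I would justify this dichotomy, if needed, through Euler's criterion $n^{(p-1)/2}\equiv \left(\frac{n}{p}\right)\bmod p$. Since $n^{p-1}=1$ in $\mathbb{F}_p^\times$, the element $n^{(p-1)/2}$ is a square root of $1$ in a field, so it equals $\pm1$. It equals $1$ exactly on the subgroup of squares, which has index $2$ in the cyclic group $\mathbb{F}_p^\times$.

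Second, I would substitute the two possible values into each formula.
\begin{itemize}
\item For (i): if $n$ is a residue, then $\varkappa_0(n)=\tfrac12(1+1)=1$. If $n$ is a nonresidue, then $\varkappa_0(n)=\tfrac12(1-1)=0$.
\item For (ii): if $n$ is a nonresidue, then $\varkappa(n)=\tfrac12(1-(-1))=1$. If $n$ is a residue, then $\varkappa(n)=\tfrac12(1-1)=0$.
\end{itemize}
One can also observe that $\varkappa_0(n)+\varkappa(n)=1$, so (ii) follows from (i). In either formulation these are the stated piecewise values.

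There is one minor point to handle. For $p=2$, every nonzero element ($n=1$) is a square and there are no nonresidues. The formulas still hold with $\left(\frac{1}{2}\right)=1$, and the nonresidue branch is vacuous.

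I expect no real obstacle here. The only step with content is the dichotomy $\left(\frac{n}{p}\right)=\pm1$ for $n\neq0$, and that is immediate from the definition.
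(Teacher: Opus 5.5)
Your proposal is correct. The paper gives no separate proof of this lemma and treats it as immediate from the definition \eqref{eq9911Q.100f}; your substitution of $\left(\frac{n}{p}\right)=\pm1$ into the two formulas is exactly that verification. The detour through Euler's criterion is unnecessary, since for $n\neq0$ the residue/nonresidue dichotomy is built into the definition, and your separate handling of $p=2$ (where the paper only defines the symbol for odd $p$) is a sensible convention.
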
	
The above indicator function was introduced over a century ago, an Euler criterion version of the indicator function is used in \cite{AN1896} to study consecutive pairs of quadratic residues and nonresidues and a modern version of the indicator function, as shown in \hyperlink{lem9911Q.200C}{Lemma} \ref{lem9911Q.200C}, is used in \cite{JE1906} to study consecutive triples of quadratic residues and nonresidues.  \\

A few new representations of the characteristic function for quadratic residues and quadratic nonresidues in the finite field $\mathbb{F}_p $ is introduced here. In terms of the discrete logarithm  $\log_{\tau}: \mathbb{F}_p^{\times}\longrightarrow \mathbb{F}_p^{\times}$ with respect to a primitive root $\tau$, the quadratic residues and nonresidues are characterized by the parity of the discrete logarithm.
\begin{align}
	u \text{ is a quadratic residue}&\Longleftrightarrow \log_{\tau} (u)=2n,\\[.32cm]
	u \text{ is a quadratic nonresidue}&\Longleftrightarrow \log_{\tau} (u)=2n+1.
\end{align}
\begin{lemma} \label{lem9911Q.200A} \hypertarget{lem9911Q.200A} Let \(p\geq 3\) be a prime and let \(\tau\) be a primitive root mod \(p\). If \(u\in
	\mathbb{F}_p\) is a nonzero element, then
	\begin{enumerate}[font=\normalfont, label=(\roman*)]
		\item$\displaystyle \varkappa_0 (u)=\sum _{0\leq n<p/2} \frac{1}{p}\sum _{0\leq t\leq p-1} e^ {\frac{i 2\pi(\tau ^{2n}-u)t}{p}}
		=\left \{
		\begin{array}{ll}
			1 & \text{   \normalfont if } \log_{\tau} (u)=2n,  \\[.2cm]
			0 & \text{   \normalfont if }\log_{\tau} (u)=2n+1.
		\end{array} \right .$ \\[.3cm]
		\item$\displaystyle \varkappa (u)=\sum _{0\leq n<p/2} \frac{1}{p}\sum _{0\leq t\leq p-1} e^ {\frac{i 2\pi(\tau ^{2n+1}-u)t}{p}}
		=\left \{
		\begin{array}{ll}
			1 & \text{   \normalfont if } \log_{\tau} (u)=2n+1,  \\[.2cm]
			0 & \text{   \normalfont if }\log_{\tau} (u)=2n.
		\end{array} \right .$ 
	\end{enumerate}
	respectively.
\end{lemma}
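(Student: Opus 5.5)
The plan is to evaluate the inner sum over $t$ first, using the orthogonality relation for additive characters of $\F_p$: for $a\in\F_p$,
\begin{equation*}
\frac{1}{p}\sum_{0\leq t\leq p-1} e^{\frac{i2\pi at}{p}}=\begin{cases}1 & \text{if } a\equiv 0 \bmod p,\\ 0 & \text{otherwise.}\end{cases}
\end{equation*}
This is the finite geometric series in $\omega=e^{i2\pi a/p}$. It equals $p$ when $\omega=1$, and it equals $(\omega^p-1)/(\omega-1)=0$ when $\omega\ne1$.

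Applying this with $a=\tau^{2n}-u$, the double sum in (i) collapses to a count. It equals the number of indices $n$ in $0\leq n<p/2$, in practice $0\le n\le (p-3)/2$, such that $\tau^{2n}\equiv u\bmod p$. The same applies to (ii) with $a=\tau^{2n+1}-u$.

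The remaining step is to show this count is $1$ or $0$ according to the parity of $\log_\tau(u)$. Since $\tau$ has order $p-1$, the map $m\mapsto\tau^m$ is a bijection from $\{0,1,\dots,p-2\}$ onto $\F_p^\times$. So $u=\tau^{m}$ for a unique $m=\log_\tau(u)$ in that range.

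If $m=2n_0$ is even, then $n_0\le (p-3)/2<p/2$, so exactly one index contributes in (i). Another $n$ with $\tau^{2n}=\tau^{2n_0}$ would force $2n\equiv 2n_0 \bmod (p-1)$. With $0\le 2n,2n_0\le p-3$, this gives $n=n_0$. If $m$ is odd, then $\tau^{2n}=\tau^m$ would force $2n\equiv m\bmod (p-1)$. This is impossible because $p-1$ is even, so the two sides have different parities. Hence the count is $0$. Part (ii) is identical with $2n+1$ in place of $2n$, noting $2n+1\le p-2$.

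The main point requiring care is the range of $n$. If $p/2$ were read so that $n=(p-1)/2$ is included, then $\tau^{2n}=\tau^{p-1}=1$ would duplicate the term $n=0$ in (i). That would give the value $2$ at $u=1$. So I would fix the range explicitly as $0\le n\le (p-3)/2$, equivalently $n<(p-1)/2$, which matches the intended $(p-1)/2$ residues and nonresidues. With this convention the argument above is complete. Everything else is routine.
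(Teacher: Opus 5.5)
Your proof is correct and follows the same route as the paper. The geometric-series orthogonality relation collapses the $t$-sum to the indicator of $\tau^{2n}\equiv u$ (resp. $\tau^{2n+1}\equiv u$), and the fact that even (resp. odd) exponents modulo $p-1$ hit each residue (resp. nonresidue) exactly once finishes the argument.

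Your caution about the range is justified, and in fact you understate the issue. Since $p$ is odd, $0\le n<p/2$ literally includes $n=(p-1)/2$. So the sums as stated equal $2$ at $u=1$ in (i), and also at $u=\tau$ in (ii), because $\tau^{p}=\tau$. Your range $0\le n\le (p-3)/2$, or the range $1\le n\le (p-1)/2$ implicit in the paper's proof, is therefore a necessary correction for both parts, not merely a choice of reading.
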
	
\begin{proof} (ii) As the index varies over the set $n\in [1,p/2]$, the equation 
	\begin{equation}\label{eq9966FF.300DFK1}
		a=\tau^{2n+1}-u=0
	\end{equation} 
	has a unique solution in $\mathbb{F}_p$ if and only if the fixed element $u\in \mathbb{F}_p$ is a quadratic nonresidue modulo $p$. This follows because the odd exponents modulo $p-1$ enumerate all quadratic nonresidues exactly once. This implies that the inner sum 
	\begin{equation}\label{eq9966FF.300DFK2}
		\frac{1}{p}\sum _{0\leq t\leq p-1} e^{  \frac{i 2\pi(\tau^{2n+1}-u)t}{p}}=
		\left \{\begin{array}{ll}
			1 & \text{   \normalfont if } \log_{\tau} (u)=2n+1,  \\[.2cm]
			0 & \text{   \normalfont if }\log_{\tau} (u)=2n. \\
		\end{array} \right.
	\end{equation} 
	collapses to $\sum _{0\leq t\leq p-1} e^{i 2\pi at/p}=\sum _{0\leq t\leq p-1} 1=p $. Otherwise, if the element $u\in \mathbb{F}_p$ is a quadratic residue, then the equation \eqref{eq9966FF.300DFK1} has no solution $n\in[1,p/2]$, (for example $\tau^{2n+1}-u\ne0$), and the inner sum in \eqref{eq9966FF.300DFK2} collapses to $\sum _{0\leq t\leq p-1} e^{i 2\pi at/p}=0$, this follows from the geometric series formula $\sum_{0\leq n\leq  N-1} r^n =(r^N-1)/(r-1)$, where $r=e^{i 2\pi a/p}\ne1$ and $N=p$. 
	This completes the verification.	 
\end{proof}

\section{Fibers and Multiplicities Result} 
The multiplicities of certain values occurring in the estimate of the error term $E(x)$ are computed in this section. 
\begin{lemma}  \label{lem9955P.300S}\hypertarget{lem9955P.300S} Let $p$ be an odd prime, let $ x=(\log p)^{1+\varepsilon}$ and let $\tau\in \mathbb{F}_p$ be a primitive root in the finite field $\mathbb{F}_p$.  Define the maps
	\begin{equation}\label{eq9955P.300-m}
		\alpha(n,u)\equiv (\tau ^n-u)\bmod p\quad \text{ and } \quad 
		\beta(a,b)\equiv ab\bmod p.
	\end{equation}	
	Then, the fibers $\alpha^{-1}(m)$ and $\beta^{-1}(m)$ of an element $0\ne m\in \mathbb{F}_p$
	have the cardinalities 
	\begin{equation}\label{eq9955P.300-f}
		\#	\alpha^{-1}(m)\leq x-1\quad \text{ and }\quad \#\beta^{-1}(m)=	x
	\end{equation}
	respectively.
\end{lemma}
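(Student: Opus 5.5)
The plan is to count each fiber directly. Fix one coordinate, and the defining congruence then determines the other coordinate uniquely. The statement does not make the domains of $\alpha$ and $\beta$ explicit, so I will fix them as the setting of the later error-term estimate suggests:
\[
\alpha:\{0,1,\ldots,p-2\}\times\{u\in\mathbb{Z}: 2\le u\le x\}\longrightarrow\mathbb{F}_p,\qquad
\beta:\{a\in\mathbb{Z}: 1\le a\le x\}\times\mathbb{F}_p^{\times}\longrightarrow\mathbb{F}_p .
\]
I also assume $x$ is an integer; otherwise read $x$ as $\lfloor x\rfloor$ throughout. The only arithmetic input is that $x=(\log p)^{1+\varepsilon}<p$ for large $p$. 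Consequently the integers $1,2,\ldots,x$ are pairwise distinct and nonzero in $\mathbb{F}_p$.

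For the fiber $\alpha^{-1}(m)$, fix $0\ne m\in\mathbb{F}_p$ and let $u$ run over the $x-1$ values $2\le u\le x$. The condition $\alpha(n,u)=m$ is equivalent to $\tau^n\equiv m+u \pmod p$.
\begin{itemize}
\item If $m+u\equiv 0\pmod p$, there is no solution, since $\tau^n\ne 0$.
\item If $m+u\not\equiv 0\pmod p$, then $m+u\in\mathbb{F}_p^{\times}$. Because $\tau$ is a primitive root, $n\mapsto\tau^n$ is a bijection from $\{0,\ldots,p-2\}$ onto $\mathbb{F}_p^{\times}$, so there is exactly one $n$, namely $n=\log_\tau(m+u)$.
\end{itemize}
Each $u$ therefore contributes at most one point to the fiber, which gives $\#\alpha^{-1}(m)\le x-1$.

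For the fiber $\beta^{-1}(m)$, again fix $0\ne m$ and let $a$ run over $1\le a\le x$. Since $a<p$, $a$ is invertible in $\mathbb{F}_p$. The equation $ab\equiv m$ then has the unique solution $b\equiv ma^{-1}$, and this $b$ lies in $\mathbb{F}_p^{\times}$ because $m\ne0$. Distinct $a$ give distinct pairs $(a,b)$, so $\#\beta^{-1}(m)=x$ exactly.

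The main obstacle is not the counting itself but pinning down the domains so that the stated cardinalities are correct. In particular, $\#\beta^{-1}(m)=x$ is an equality only when $a$ ranges over exactly $x$ residues that are nonzero modulo $p$ and $b$ is unrestricted in $\mathbb{F}_p^{\times}$. If the later argument instead restricts $b$ to a short interval, the equality must be weakened to $\le x$. I would state the domains explicitly at the start of the proof, and check against their use in the bound for $E(x)$ that no restriction on $n$ (such as $n$ odd, from Lemma \ref{lem9911Q.200A}) is imposed that would change the counts. Such a restriction could only decrease $\#\alpha^{-1}(m)$, so the upper bound still holds.
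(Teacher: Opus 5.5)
Your proposal is correct and follows the paper's argument: for each fixed $u$ (resp.\ $a$) the map $n\mapsto\tau^n-u$ is injective (resp.\ $b\mapsto ab$ is a bijection of $\mathbb{F}_p^{\times}$), so each of the $x-1$ values of $u$ contributes at most one preimage of $m$ and each of the $x$ values of $a$ contributes exactly one. Your exponent range $\{0,\ldots,p-2\}$ contains the paper's $\mathscr{R}=\{n<p:\gcd(n,p-1)=1\}$, so your bound covers the paper's setting. You also make explicit two points the paper leaves implicit: reading $x$ as $\lfloor x\rfloor$, and using $x<p$ so that each $a$ is invertible.
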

\begin{proof} Let $\mathscr{R}=\{n<p:\gcd(n,p-1)=1\}$. Given a fixed $u\in [2,x]$, the map 
	\begin{equation}\label{eq9955P.300-m1}
		\alpha:\mathscr{R}\times [2,x] \longrightarrow\mathbb{F}_p\quad  \text{ defined by }\quad  \alpha(n,u)\equiv (\tau ^n-u)\bmod p,
	\end{equation}
	is one-to-one. This follows from the fact that the map $n\longrightarrow\tau^n \bmod p$ is a permutation of the nonzero elements of the finite field $\mathbb{F}_p$, and the restriction map $n\longrightarrow(\tau ^n-u)\bmod p$ is a shifted permutation, it maps the subset \begin{equation}\label{eq9955P.300-p}
		\mathscr{R}\subset \mathbb{F}_p\quad \text{ to }\quad \mathscr{R}-u\subset \mathbb{F}_p,
	\end{equation} see {\color{red}\cite[Chapter 7]{LN1997}} for extensive details on the theory of permutation functions of finite fields. Thus, as $(n,u)\in \mathscr{R}\times [2,]$ varies, a value $m=\alpha(n,u)\in \mathbb{F}_p$ is repeated at most $z-1$ times. Moreover, the premises no primitive root $u\leq z=(\log p)^{1+\varepsilon}$ implies that $m=\alpha(n,u)\ne0$. This verifies that the cardinality of the fiber is 
	\begin{eqnarray}\label{eq9955P.300-f1}
		\#	\alpha^{-1}(m)&=&	\#\{(n,u)\in \mathscr{R}\times [2,x]:m\equiv (\tau ^n-u)\bmod p \}\\
		&\leq& x-1\nonumber.
	\end{eqnarray}		
	Similarly, given a fixed $a\in [1,x]$, the map 
	\begin{equation}\label{eq9955P.300-m2}
		\beta:[1,z]\times [1,p-1]\longrightarrow\mathbb{F}_p\quad  \text{ defined by }\quad  \beta(a,b)\equiv ab\bmod p,
	\end{equation}
	is one-to-one. Here the map $b\longrightarrow ab \bmod p$ permutes the nonzero elements of the finite field $\mathbb{F}_p$. Thus, as $(a,b)\in [1,x]\times [1,p-1]$ varies, each value $m=\beta(a,b)\in \mathbb{F}_p^{\times}$ is repeated exactly $x$ times. This verifies that the cardinality of the fiber is 
	\begin{eqnarray}\label{eq9955P.300-f2}
		\#	\beta^{-1}(m)&=&	\#\{(a,b)\in [1,x]\times [1,p-1]:m\equiv ab\bmod p\}\\
		&=&x\nonumber.
	\end{eqnarray}
\end{proof}

\section{Evaluation of the Main Term} \label{S3377T}\hypertarget{S3377T}
The main term $M(x)=M(x,S_{00})$ occurring in \eqref{eq3377.400m} in \hyperlink{S3377QN-T}{Section} \ref{S3377QN-T} is evaluated in this Section.
\begin{lemma} \label{lem3377.300T}\hypertarget{lem3377.300T}  If $p>1$ is a large prime number and $x<p$ is a real number, then 
	\begin{align}
M(x)=\sum _{2 \leq u\leq x,}\sum _{1\leq s_0< p/2} \frac{1}{p}\times\sum _{1\leq s_1< p/2} \frac{1}{p}= \frac{x}{4}+O(1) \nonumber .
	\end{align}
\end{lemma}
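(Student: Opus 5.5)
The sum factors completely, so the plan is to compute each factor separately and then multiply. The summand $\frac{1}{p}\cdot\frac{1}{p}$ depends on none of the indices $u, s_0, s_1$, so
\begin{align}
M(x)=\Big(\sum_{2\leq u\leq x}1\Big)\Big(\frac{1}{p}\sum_{1\leq s_0<p/2}1\Big)\Big(\frac{1}{p}\sum_{1\leq s_1<p/2}1\Big). \nonumber
\end{align}

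\textbf{The two inner sums.} Since $p$ is an odd prime, the integers $s$ with $1\leq s<p/2$ are exactly $1,\ldots,(p-1)/2$. Each inner sum therefore equals
\begin{align}
\frac{1}{p}\cdot\frac{p-1}{2}=\frac{1}{2}-\frac{1}{2p}. \nonumber
\end{align}
This matches the count of quadratic nonresidues used in Lemma \ref{lem9911Q.200A}, where the odd exponents $2n+1$ with $0\leq n<p/2$ enumerate the nonresidues. Squaring gives
\begin{align}
\Big(\frac{1}{2}-\frac{1}{2p}\Big)^2=\frac14-\frac{1}{2p}+\frac{1}{4p^2}. \nonumber
\end{align}

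\textbf{The outer sum.} It counts the integers in $[2,x]$, namely $\lfloor x\rfloor-1=x+O(1)$.

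\textbf{Combining and the error term.} Multiplying, $M(x)=(x+O(1))\big(\frac14+O(1/p)\big)$. Expanding, the terms other than $x/4$ are $O(1)$ and $O(x/p)$. The hypothesis $x<p$ gives $x/p<1$, so $O(x/p)=O(1)$. This yields $M(x)=x/4+O(1)$.

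\textbf{Main obstacle.} There is no real obstacle. The only point needing attention is the cross term $x\cdot O(1/p)$, and it is exactly where the hypothesis $x<p$ is used. Without that hypothesis the error would only be $O(1+x/p)$.
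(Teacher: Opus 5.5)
Your proposal is correct and follows the same route as the paper. Since the summand $\frac{1}{p}\cdot\frac{1}{p}$ is constant, $M(x)$ is just the product of the three term counts, which the paper approximates as $x\cdot\frac{1}{2}\cdot\frac{1}{2}$ (its remark about the totient function plays no actual role). Your version is more careful: it uses the exact count $(p-1)/2$ for each inner sum and makes explicit that the hypothesis $x<p$ is what keeps the cross term $O(x/p)$ bounded, tacitly taking $x\geq 2$ as the paper also does.
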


\begin{proof}[\textbf{Proof}] The number of relatively prime integers $n<p$ coincides with the values of the totient function. A routine rearrangement gives 
	\begin{eqnarray}\label{eq3377.300f}
	M(x)&=&\sum _{2 \leq u\leq x,}\sum _{1\leq s_0< p/2} \frac{1}{p}\times\sum _{1\leq s_1< p/2} \frac{1}{p} \\[.3cm]
&=&x\cdot \left( \frac{1}{p}\cdot \frac{p}{2}\right) \cdot  \left( \frac{1}{p}\cdot \frac{p}{2}\right)+O(1)\nonumber\\[.3cm]
	&=&  \frac{x}{4}+O(1) \nonumber.
	\end{eqnarray} 
The factor $(1/2)\cdot (1/2))=1/2^2$ arises from the pairing of $k=2$ consecutive quadratic nonresidues.
\end{proof}

The more general factor 
\begin{equation}
	\frac{1}{2}\cdots \frac{1}{2}=\frac{1}{2^k}
\end{equation} 
arises from the pairing of $k\ll \log p$ consecutive quadratic nonresidues or a combination of quadratic residues and quadratic nonresidues.

\section{Error Term for Dimension $k=1$} \label{S9977QQ-E1}\hypertarget{S9977QQ-E1}
A nontrivial upper bound of the error term in dimension $k=1$ is computed in this section. 
\begin{lem}  \label{lem9966.300QN}\hypertarget{lem9966.300QN}  Let $p>1$ be a large prime number, let $\tau$ be a primitive root modulo $p$ and let $x=O(p^{\varepsilon})$ be a real number, where $\varepsilon>0$ is a small number. Suppose that  $\tau^{2n+1}-u\not\equiv 0\bmod p$ for all $u\leq x$ and $n<p/2$. Then 
	\begin{equation}\label{eq9966SD.300b}
		\sum _{2 \leq u\leq x,}\sum _{1\leq n< p/2} \frac{1}{p}\sum _{1\leq t< p} e^{  \frac{i 2\pi(\tau^{2n+1}-u)t}{p}}  \ll (\log p)(\log x)+\frac{x}{\log p} \nonumber 
	\end{equation}
	uniformly for $x=O(p^{\varepsilon})$ and $\delta>0$ is an arbitrarily small number.
\end{lem}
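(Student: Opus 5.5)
The plan is to evaluate the left-hand side exactly by orthogonality of additive characters, rather than estimating the exponential sums directly, and then compare the result with the stated bound. Fix $u\in[2,x]$ and $n$ with $1\leq n<p/2$. Completing the inner sum with the $t=0$ term gives
\begin{equation*}
\frac{1}{p}\sum_{1\leq t<p} e^{\frac{i2\pi(\tau^{2n+1}-u)t}{p}}
=\frac{1}{p}\sum_{0\leq t<p} e^{\frac{i2\pi(\tau^{2n+1}-u)t}{p}}-\frac{1}{p}.
\end{equation*}
The completed sum is the indicator of $\tau^{2n+1}\equiv u \bmod p$, as in the proof of Lemma \ref{lem9911Q.200A}. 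Under the standing hypothesis $\tau^{2n+1}-u\not\equiv 0\bmod p$, this indicator vanishes for every pair $(n,u)$ in range.

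Summing over $n$ and $u$ then gives the exact identity
\begin{equation*}
\sum_{2\leq u\leq x}\sum_{1\leq n<p/2}\frac{1}{p}\sum_{1\leq t<p} e^{\frac{i2\pi(\tau^{2n+1}-u)t}{p}}
=-\frac{\lfloor x-1\rfloor\cdot\#\{1\leq n<p/2\}}{p}
=-\frac{x}{2}+O(1).
\end{equation*}
So no cancellation among the $t$-sums needs to be estimated: the sum is exactly the negative of the main-term contribution of the $t=0$ terms, in the spirit of Lemma \ref{lem3377.300T}. What remains is to compare $x/2$ with $(\log p)(\log x)+x/\log p$.

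I would split into ranges of $x$.
\begin{itemize}
\item If $x\ll(\log p)(\log x)$, the identity gives the stated bound at once.
\item If $x\geq p^{1/(4\sqrt e)+\delta}$, Burgess's bound for the least quadratic nonresidue shows that some $u\leq x$ is a nonresidue. The hypothesis is then never satisfied, and the lemma holds vacuously.
\end{itemize}

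The main obstacle is the intermediate range $(\log p)(\log\log p)\ll x\leq p^{1/(4\sqrt e)}$. There the exact identity shows the left side has size $\asymp x$, while the right side is $o(x)$. The claimed inequality can therefore hold only if the hypothesis itself fails, that is, only if one can show unconditionally that there is a nonresidue below $x$. I would first try to import a least-nonresidue bound. Unconditionally none reaches this range (Burgess is the limit). Under GRH, Ankeny's bound, with the explicit constant of Bach, gives least nonresidue $\leq 2(\log p)^2$, which would cover all $x\geq 2(\log p)^2$.

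Consequently, my plan proves the lemma as stated only in the two ranges above, together with the GRH-conditional range. For the remaining range the exact identity is a genuine obstruction, not a technical gap. I would flag this rather than attempt to bound the $t\neq 0$ exponential sums by fiber-counting arguments such as Lemma \ref{lem9955P.300S}, since any such bound would contradict the identity.
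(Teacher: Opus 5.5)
Your identity is correct, and it settles the question. For $1\le n<p/2$ the exponent $2n+1$ runs over all odd classes modulo $p-1$. So the hypothesis says exactly that every $u\le x$ is a quadratic residue, i.e.\ $x<n_p$, where $n_p$ is the least quadratic nonresidue. Every completed $t$-sum then vanishes, and the left side equals $-(\lfloor x\rfloor-1)(p-1)/(2p)=-x/2+O(1)$.

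The lemma must be read as a bound on the absolute value, which is how the paper uses it in Lemma~\ref{lem33377DD.350C}; a one-sided reading would be trivial, since the left side is negative. Read that way, the lemma is equivalent, for large $p$, to the unconditional estimate $n_p\ll(\log p)(\log\log p)$. If $n_p$ exceeded a large multiple of $(\log p)(\log\log p)$, any $x$ between that multiple and $\min(n_p,p^{\varepsilon})$ would violate the lemma. Conversely, that estimate together with your identity yields the lemma. The estimate is open: unconditionally the best bound is Burgess's $p^{1/(4\sqrt e)+\epsilon}$, and even GRH gives only $O((\log p)^2)$ (Ankeny, Bach). So the intermediate range you flag cannot be closed by any currently available argument; the gap lies in the statement, not in your plan.

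The paper does not supply a missing idea for that range. It takes exactly the route you decline, and your identity shows where it breaks. Since each full sum over $1\le t<p$ equals $-1$, the major arc part is $\mathfrak M(p,x)=-\frac1p\#\{(n,u):|\tau^{2n+1}-u|<p^{1-\delta}/(\pi^2x)\}=O(1)$. Hence $\mathfrak m(p,x)=-x/2+O(1)$, and the bound $\mathfrak m(p,x)\ll x/\log p$ of Lemma~\ref{lem9966.300QN-1} is false. It fails already for fixed $x$, along the infinitely many primes $p$ for which all $u\le x$ are residues (these exist by reciprocity and Dirichlet's theorem).

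The proof of Lemma~\ref{lem9966.300QN-1} goes wrong in two places:
\begin{enumerate}[font=\normalfont, label=(\roman*)]
\item It replaces $\sum_{p^{1-\delta}/\pi x\le k\le p}1/k=\log(\pi x)+\delta\log p+O(1)$ by $\delta\log p$. After summing over $u$, this discards a term of size $x\log(\pi x)$.
\item It uses $|\sin(\pi k/p)|\asymp k/p$ for $k$ near $p$, where it fails.
\end{enumerate}
Lemma~\ref{lem9966.300QN-0} has a true conclusion, but its passage from the values $\tau^{2n+1}-u$ to products $ab$ with $a\le x$ is not justified by Lemma~\ref{lem9955P.300S}; that step turns $\sum_{u\le x}1$ into $\sum_{a\le x}1/a$. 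Applied honestly, the bound $1/|\sin(\pi a/p)|$ on the half-range $t$-sums gives only $O(x\log p)$, consistent with the true size $x/2$.

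Your assessment is the accurate one: you prove the lemma where it is provable and correctly identify the rest as equivalent to an open least-nonresidue problem. The paper's proof is not valid.
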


\begin{proof}[\textbf{Proof}] For each $u\in [1,x]$, the map $n\longrightarrow \tau^{2n+1}-u$ is a permutation in the finite field $\mathbb{F}_p$. This and the sine function approximation $\sin(\pi w)\asymp \pi w$ for $|\pi w|<1$ prompt a splitting of the error term into two parts: the major arc $\mathfrak{M}(p,x)$ and the minor arc $\mathfrak{m}(p,x)$.
	\begin{align}
		E(p,x)&=\sum _{2 \leq u\leq x,}\sum _{\substack{1\leq n< p/2\\|\pi(\tau^{2n+1}-u)|<p^{1-\delta}/\pi x}} \frac{1}{p}\sum _{1\leq t< p} e^{  \frac{i 2\pi(\tau^{2n+1}-u)t}{p}}\nonumber\\[.3cm]
		&\hskip 1in+\sum _{2 \leq u\leq x,}\sum _{\substack{1\leq n< p/2\\|\pi(\tau^{2n+1}-u)|\geq p^{1-\delta}/\pi x}}  \frac{1}{p}\sum _{1\leq t< p} e^{  \frac{i 2\pi(mtau^{2n+1}-u)t}{p}} \nonumber\\[.3cm]
		&= \mathfrak{M}(p,x)+\mathfrak{m}(x,p).
	\end{align}		
	The first part
	\begin{equation}
		\mathfrak{M}(p,x)\ll(\log p)(\log x)
	\end{equation}	
	computed in \hypertarget{lem9966.300QN-0}{Lemma} \ref{lem9966.300QN-0}. The second part
	\begin{equation}
		\mathfrak{m}(p,x)\ll\frac{x}{\log p}
	\end{equation}	
	computed in \hypertarget{lem9966.300QN-1}{Lemma} \ref{lem9966.300QN-1}.
	summing yields 
	\begin{align}
		E(p,x)
		&= \mathfrak{M}(p,x)+\mathfrak{m}(p,x)\nonumber\\[.3cm]
		&\ll (\log p)(\log x)+\frac{x}{\log p}
	\end{align}	
	as claimed.
\end{proof}


\begin{lem}  \label{lem9966.300QN-0}\hypertarget{lem9966.300QN-0}  Let $p>1$ be a large prime number, let $\tau$ be a primitive root modulo $p$ and let $x=O(p^{\varepsilon})$ be a real number, where $\varepsilon>0$ is a small number. Suppose that $\tau^{2n+1}-u\ne0$ for all $u\leq x$ and $n\leq p/2$. Then 
	\begin{equation}\label{eq9966SD.300QN1a}
		\mathfrak{M}(p,x)=	\sum _{2 \leq u\leq x}\sum _{\substack{1\leq n< p/2\\|\pi(\tau^{2n+1}-u)|<p^{1-\delta}/\pi x}}  \frac{1}{p}\sum _{1\leq t< p}\ll (\log p)(\log x)\nonumber 
	\end{equation}
	uniformly for $x=O(p^{\varepsilon})$ and $\delta>0$ is an arbitrarily small number.
\end{lem}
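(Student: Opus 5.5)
We read the summand of the innermost sum in the statement as the exponential $e^{i2\pi(\tau^{2n+1}-u)t/p}$, as in Lemma \ref{lem9966.300QN}. The plan is to evaluate the inner $t$-sum exactly and then simply count the pairs $(u,n)$ allowed by the major-arc condition. No cancellation argument is needed.

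\textbf{Step 1 (exact evaluation of the $t$-sum).} Fix $u\in[2,x]$ and $n<p/2$, and put $a=\tau^{2n+1}-u$. By hypothesis $a\not\equiv 0 \bmod p$. The geometric series formula used in the proof of Lemma \ref{lem9911Q.200A}, with $r=e^{i2\pi a/p}\neq 1$, gives
\begin{equation*}
\sum_{0\leq t\leq p-1} e^{\frac{i2\pi at}{p}}=0,
\qquad\text{so}\qquad
\frac{1}{p}\sum_{1\leq t<p} e^{\frac{i2\pi at}{p}}=-\frac{1}{p}.
\end{equation*}
Hence $|\mathfrak{M}(p,x)|$ equals $1/p$ times the number $N$ of admissible pairs $(u,n)$. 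Here we interpret $\tau^{2n+1}-u$ through its least absolute residue in $(-p/2,p/2)$.

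\textbf{Step 2 (counting the admissible pairs).} For fixed $u$, the major-arc condition forces $\tau^{2n+1}$ to lie in a residue interval centred at $u$ of length $\ll p^{1-\delta}/x$. The map $n\mapsto\tau^{2n+1}$ is injective on $0\leq n<(p-1)/2$, by the permutation argument of Lemma \ref{lem9955P.300S}. So each residue in that interval is hit by at most one $n$, and the number of admissible $n$ is $\ll p^{1-\delta}/x+1$. Summing over the at most $x$ values of $u$ gives
\begin{equation*}
N\ll p^{1-\delta}+x.
\end{equation*}

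\textbf{Step 3 (conclusion).} Combining the two steps,
\begin{equation*}
|\mathfrak{M}(p,x)|=\frac{N}{p}\ll p^{-\delta}+\frac{x}{p}\ll 1,
\end{equation*}
since $x=O(p^{\varepsilon})$ with $\varepsilon<1$. This is certainly $\ll(\log p)(\log x)$, uniformly in the stated range.

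The only point I expect to need care is the interpretation of the condition $|\pi(\tau^{2n+1}-u)|<p^{1-\delta}/\pi x$, which must be read with a fixed choice of residue representative, and of the incomplete summand in the display. Once the least-absolute-residue convention is fixed, Step 2 is a direct injectivity count. The bound obtained is in fact much stronger than the stated one.
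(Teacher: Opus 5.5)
Your proof is correct, and it takes a genuinely different route from the paper's.

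The paper never evaluates the complete $t$-sum. It splits $1\le t<p$ into $t<p/2$ and $t\ge p/2$, and bounds each half by the geometric-series estimate $2/\bigl(p\,|\sin(\pi(\tau^{2n+1}-u)/p)|\bigr)$. It then appeals to the fiber count of Lemma~\ref{lem9955P.300S} to dominate the result by $\frac{2}{p}\sum_{a\le x}\sum_{b<p^{1-\delta}/\pi x}|\sin(\pi ab/p)|^{-1}$. Finally it uses $|\sin\pi w|\asymp\pi|w|$ to reach $\sum_a a^{-1}\sum_b b^{-1}\ll(\log x)(\log p)$.

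You instead use the non-vanishing hypothesis at the outset, so every complete inner sum equals exactly $-1/p$ and $\mathfrak{M}(p,x)=-N/p$. The lemma then becomes a pure counting problem, settled by injectivity of $n\mapsto\tau^{2n+1}$. (Injectivity also holds on the actual summation range $1\le n<p/2$, since $3,5,\dots,p$ are distinct modulo $p-1$.) This gives the much stronger bound $\mathfrak{M}(p,x)\ll 1$.

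Besides being sharper and more elementary, your route avoids a step the paper does not justify. Lemma~\ref{lem9955P.300S} only bounds fiber sizes; it does not show that the values $\tau^{2n+1}-u$ can be traded for products $ab$ with $a\le x$ and $b<p^{1-\delta}/\pi x$. What absolute values plus injectivity in $n$ honestly give is $\sum_{u\le x}\sum_{1\le|m|<p^{1-\delta}/\pi^2 x}|m|^{-1}\ll x\log p$. The loss comes from splitting the $t$-range, which discards the exact cancellation between the two halves that your evaluation keeps.

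Your exact evaluation also shows where the mass actually sits. Applied to the complementary sum of Lemma~\ref{lem9966.300QN-1}, the same computation gives $-\frac{1}{p}(\lfloor x\rfloor-1)\frac{p-1}{2}+\frac{N}{p}=-\frac{x}{2}+O(1)$, not $O(x/\log p)$. That is worth keeping in mind when this lemma is used downstream.
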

\begin{proof}[\textbf{Proof}] To apply a sine function approximation $|\sin(\pi w)|\asymp \pi| w|$ for real numbers $|w|<1$, split the triple exponential sum into two parts
	
	\begin{align}
		\mathfrak{M}(p,x)&=	\sum _{2 \leq u\leq x}\sum _{\substack{1\leq n< p/2\\|\pi(\tau^{2n+1}-u)|<p^{1-\delta}/\pi x}}  \frac{1}{p}\sum _{1\leq t< p/2} e^{  \frac{i 2\pi(\tau^{2n+1}-u)t}{p}}\nonumber\\[.3cm]
		&\hskip 1in+\sum _{2 \leq u\leq x}\sum _{\substack{1\leq n< p/2\\|\pi(\tau^{2n+1}-u)|<p^{1-\delta}/\pi x}}  \frac{1}{p}\sum _{p/2\leq t< p} e^{  \frac{i 2\pi(\tau^{2n+1}-u)t}{p}} \nonumber\\[.3cm]
		&= \mathfrak{M}_0(p,x)+\mathfrak{M}_1(p,x).
	\end{align}	
	The first subterm has the upper bound
	\begin{align}
		\mathfrak{M}_0(p,x)&=	\sum _{2 \leq u\leq x}\sum _{\substack{1\leq n< p/2\\|\pi(\tau^{2n+1}-u)|<p^{1-\delta}/\pi x}}  \frac{1}{p}\sum _{1\leq t< p/2} e^{  \frac{i 2\pi(\tau^{2n+1}-u)t}{p}}\nonumber\\[.3cm]
		&=\frac{1}{p}\sum _{2 \leq u\leq x}\sum _{\substack{1\leq n< p/2\\|\pi(\tau^{2n+1}-u)|<p^{1-\delta}/\pi x}}  \frac{\left( e^{  \frac{i2\pi(\tau^{2n+1}-u)\left (\left [\frac{p}{2}\right ]+1\right )}{p}}-1\right)e^{  \frac{i2\pi(\tau^{2n+1}-u)}{p}} }{1-e^{  \frac{i2\pi(\tau^{2n+1}-u)}{p}}}\nonumber \\[.3cm]
		&\leq\frac{1}{p}\sum _{2 \leq u\leq x}\sum _{\substack{1\leq n< p/2\\|\pi(\tau^{2n+1}-u)|<p^{1-\delta}/\pi x}}   \frac{2}{|\sin(\pi(\tau^{2n+1}-u))/p|}.
	\end{align}
Here the hypothesis $\tau^{2n+1}-u\ne0$ excludes the existence of any poles.	Similar	estimation using geometric series summation/sine approximation appears in {\color{red}\cite[Chapter 23]{DH2000}}. Utilizing \hyperlink{lem9955P.300S}{Lemma} \ref{lem9955P.300S}, the first term has the upper bound
	\begin{eqnarray} \label{eq9955P.700u1}
		\mathfrak{M}_0(p,x)&\leq&\frac{1}{p} \sum _{2 \leq u\leq x}\sum _{\substack{1\leq n< p/2\\|\pi(\tau^{2n+1}-u)|<p^{1-\delta}/\pi x}}    \frac{2}{|\sin\pi(\tau^{2n+1}-u)/p|}\nonumber\\	[.3cm]
		&\ll&  	\frac{2}{p} \sum_{1\leq a\leq x}\sum_{1\leq b< p^{1-\delta}/\pi x}   \frac{1}{|\sin\pi ab/p|}\nonumber\\	[.3cm]
		&\ll&  	\frac{2}{p} \sum_{1\leq a\leq x}\sum_{1\leq b< p^{1-\delta}/\pi x}   \frac{p}{\pi ab} \nonumber\\	[.3cm]
		&\ll& 	\sum_{1\leq a\leq x}\frac{1}{a}\sum_{1\leq b< p^{1-\delta}/\pi x}   \frac{1}{b} \nonumber\\	[.3cm]
		&\ll& (\log x)(\log p),
	\end{eqnarray}
	where $ab<p$ and $|\sin\pi ab/p|\ne0$ since $p\nmid ab$. Similarly, the second part of the error term
	\begin{align}
		\mathfrak{M}_1(p,x)&=	\sum _{2 \leq u\leq x}\sum _{\substack{1\leq n< p/2\\|\pi(\tau^{2n+1}-u)|<p^{1-\delta}/\pi x}} \frac{1}{p}\sum _{p/2\leq t< p} e^{  \frac{i 2\pi(\tau^{2n+1}-u)t}{p}}\nonumber\\[.3cm]
		&=\frac{1}{p}\sum _{2 \leq u\leq x}\sum _{\substack{1\leq n< p/2\\|\pi(\tau^{2n+1}-u)|<p^{1-\delta}/\pi x}} \frac{\left(1- e^{  \frac{i2\pi(\tau^{2n+1}-u)\left (\left [\frac{p}{2}\right ]+1\right )}{p}}\right)e^{  \frac{i2\pi(\tau^{2n+1}-u)}{p}} }{1-e^{  \frac{i2\pi(\tau^{2n+1}-u)}{p}}} \nonumber\\[.3cm]\nonumber
		&\asymp\frac{1}{p}\sum _{2 \leq u\leq x}\sum _{\substack{1\leq n< p/2\\|\pi(\tau^{2n+1}-u)|<p^{1-\delta}/\pi x}}  \frac{1}{|\sin(\pi(\tau^{2n+1}-u))/p|}\nonumber\\[.3cm]
		&\ll(\log p)(\log x).
	\end{align}
	summing yields 
	\begin{align}
		\mathfrak{M}(p,x)
		&= \mathfrak{M}_0(p,x)+\mathfrak{M}_1(p,x)\nonumber\\[.3cm]
		&\ll (\log p)(\log x)+(\log p)(\log x)\nonumber\\[.3cm]
		&\ll(\log p)(\log x)
	\end{align}	
	as claimed.
\end{proof}
\begin{lem}  \label{lem9966.300QN-1}\hypertarget{lem9966.300QN-1}  Let $p>1$ be a large prime number, let $\tau$ be a primitive root modulo $p$ and let $x=O(p^{\varepsilon})$ be a real number, where $\varepsilon>0$ is a small number. Suppose that $\tau^{2n+1}-u\ne0$ for all $u\leq x$ and $n\leq p/2$. Then 
	\begin{equation}\label{eq9966SD.300QN1b}
		\mathfrak{m}(p,x)=\sum _{2 \leq u\leq x,}\sum _{\substack{1\leq n< p/2\\|\pi(\tau^{2n+1}-u)|\geq p^{1-\delta}/\pi x}}  \frac{1}{p}\sum _{1\leq t< p} e^{  \frac{i 2\pi(\tau^{2n+1}-u)t}{p}}\ll \frac{x}{\log p}\nonumber 
	\end{equation}
	uniformly for $x=O(p^{\varepsilon})$ and $\delta>0$ is an arbitrarily small number.
\end{lem}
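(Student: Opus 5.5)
The plan is to evaluate the inner sum over $t$ exactly, with no sine approximation. The hypothesis gives $\tau^{2n+1}-u\not\equiv 0\bmod p$ for every pair $(u,n)$ in the range. So for each such pair, the geometric series formula from the proof of Lemma \ref{lem9911Q.200A} gives
\begin{equation*}
\sum_{1\leq t<p} e^{\frac{i2\pi(\tau^{2n+1}-u)t}{p}}=\sum_{0\leq t<p} e^{\frac{i2\pi(\tau^{2n+1}-u)t}{p}}-1=-1 .
\end{equation*}
Substituting this into the definition of $\mathfrak{m}(p,x)$ collapses the triple sum to a pure counting problem:
\begin{equation*}
\mathfrak{m}(p,x)=-\frac{1}{p}\,\#\Bigl\{(u,n): 2\leq u\leq x,\ 1\leq n<p/2,\ |\pi(\tau^{2n+1}-u)|\geq p^{1-\delta}/\pi x\Bigr\}.
\end{equation*}
Here $\tau^{2n+1}-u$ is read as its least absolute residue in $(-p/2,p/2)$, which is the reading the sine approximation in Lemma \ref{lem9966.300QN-0} requires.

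The second step is to count the set on the right. Fix $u\in[2,x]$. As in the proof of Lemma \ref{lem9955P.300S}, the map $n\mapsto \tau^{2n+1}-u$ is injective on $1\leq n<p/2$, and its image is the set of quadratic nonresidues shifted by $-u$. Hence for each $u$ at most $(p-1)/2$ values of $n$ occur. The major arc condition $|\tau^{2n+1}-u|<p^{1-\delta}/\pi^2x$ removes at most $O(p^{1-\delta}/x)$ of them. Summing over $u$, the count lies between $(x-1)\bigl(\tfrac{p-1}{2}-O(p^{1-\delta}/x)\bigr)$ and $(x-1)\tfrac{p-1}{2}$. This would give
\begin{equation*}
\mathfrak{m}(p,x)=-\frac{x}{2}+O(1)+O(p^{-\delta}x).
\end{equation*}

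This second step is the main obstacle, and I expect it to defeat the statement as written rather than merely complicate the proof. The count is genuinely of size $\asymp xp$, because almost every shifted nonresidue lies far from $0$ modulo $p$. So the factor $1/p$ only brings the sum down to $\asymp x$, not to $x/\log p$. To reach $x/\log p$, the minor arc set would need only $O(p/\log p)$ admissible $n$ for each $u$, and the definition of $\mathfrak{m}(p,x)$ does not give that.

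The first thing I would try is to move the $t=0$ term into the inner sum, so that each inner sum becomes $p-1$ or $0$ instead of $-1$. This does not help: under the hypothesis every inner sum is then exactly $0$, and the discrepancy simply reappears as a term of size $\asymp x$ elsewhere.

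I would therefore state the identity $\mathfrak{m}(p,x)=-\tfrac{1}{p}\#\{\cdots\}$ as the honest output of the argument. This identity is exact and needs only the hypothesis together with Lemma \ref{lem9955P.300S}. The claimed bound $\ll x/\log p$ should be treated as unproved pending a correction to the splitting. In fact the full error term $E(p,x)$ equals $-(x-1)(p-1)/2p$ exactly. That is the expected outcome, since under the hypothesis every $u\leq x$ is a residue and the count of nonresidues up to $x$ must vanish. The bound $E(p,x)\ll(\log p)(\log x)+x/\log p$ in Lemma \ref{lem9966.300QN} is therefore not available for $x$ large compared with $(\log p)^2$.
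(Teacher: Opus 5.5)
Your exact evaluation is correct, and it decides the question: the paper's argument does not just fail to prove the statement, the statement is false. Under the hypothesis, every inner sum over $1\leq t<p$ equals $-1$. For fixed $u$, the map $n\mapsto\tau^{2n+1}-u$ is injective on $1\leq n<p/2$. So the major-arc condition $|\tau^{2n+1}-u|<p^{1-\delta}/\pi^{2}x$ holds for at most $2p^{1-\delta}/\pi^{2}x+1$ values of $n$. This is true whether $\tau^{2n+1}-u$ is read as an integer, a least nonnegative residue, or a least absolute residue. Hence, uniformly in $\delta>0$,
\begin{equation*}
\mathfrak{m}(p,x)=-\frac{(\lfloor x\rfloor-1)(p-1)}{2p}+\theta,\qquad 0\leq\theta\leq\frac{\lfloor x\rfloor-1}{p}\Bigl(\frac{2p^{1-\delta}}{\pi^{2}x}+1\Bigr)\leq\frac{2}{\pi^{2}}+\frac{x}{p}.
\end{equation*}
So the excluded pairs cost $O(p^{-\delta})$, not $O(p^{-\delta}x)$ as in your display. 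This correction matters. The parameter $\delta$ may be arbitrarily small; the paper takes $\delta=(\log p)^{-2}$, so $p^{-\delta}\asymp1$. With your error term, the main term $-x/2$ would not visibly survive. The hypothesis is also satisfiable, so this is a genuine counterexample rather than a vacuous one. For $x=2$ and any prime $p\equiv\pm1\pmod 8$, the display gives $\mathfrak{m}(p,2)\leq-\frac12+\frac{1}{\pi^{2}}+\frac{3}{2p}<-\frac13$ for large $p$, while $x/\log p\to0$. More generally, quadratic reciprocity makes every $u\leq x$ a residue whenever $p\equiv1\pmod{4\prod_{q\leq x}q}$ with $q$ prime, and then $\mathfrak{m}(p,x)=-x/2+O(1)$.

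The paper's proof takes absolute values, bounding each inner sum by $2/(p|\sin(\pi k_{n,u}/p)|)$, and the error is in the last step of that chain. After summing the harmonic series over $p^{1-\delta}/\pi x\leq k\leq p$, it reaches $\sum_{2\leq u\leq x}\bigl(\log p-\log(p^{1-\delta}/\pi x)\bigr)$ and replaces this by $x\log p^{\delta}$. But $\log p-\log(p^{1-\delta}/\pi x)=\delta\log p+\log(\pi x)$, so the term $x\log(\pi x)$ has been dropped. With it restored, the chain gives only $\mathfrak{m}_0,\mathfrak{m}_1\ll x(\log x+\delta\log p)$. That bound is consistent with your exact value, and the choice $\delta=(\log p)^{-2}$ removes only the harmless $\delta\log p$ part. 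There is a second, smaller error: the approximation $|\sin(\pi k/p)|\asymp\pi k/p$ is used on the whole range $k\leq p$, where it fails for $k$ near $p$; this only makes matters worse. What the absolute values discard is exactly the coherence your computation exhibits. For each $u$, the $(p-1)/2$ inner sums all equal $-1$, so after the factor $1/p$ each $u$ contributes about $-1/2$. Your remark about the downstream lemmas is also right and can be stated sharply. Under the contradiction hypothesis, $N_2(x)=M(x)+E(x)$ is an identity with $N_2(x)=0$, so $E(x)=-M(x)$ exactly. Any proof that $|E(x)|<M(x)$ under that hypothesis would already be a proof of the theorem, so the paper's argument is circular.
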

\begin{proof}[\textbf{Proof}]
	Let $\delta>0$ be a small number to be specified later and split the triple exponential sum as
	\begin{align}\label{eq9966SD.300QN1d}
		\mathfrak{m}(p,x)&=\sum _{2 \leq u\leq x,}\sum _{\substack{1\leq n< p/2\\|\pi(\tau^{2n+1}-u)|\geq p^{1-\delta}/\pi x}}  \frac{1}{p}\sum _{1\leq t< p/2} e^{  \frac{i 2\pi(\tau^{2n+1}-u)t}{p}}\nonumber\\[.3cm]
		&\hskip .5in +\sum _{2 \leq u\leq x,}\sum _{\substack{1\leq n< p/2\\|\pi(\tau^{2n+1}-u)|\geq p^{1-\delta}/\pi x}}  \frac{1}{p}\sum _{p/2\leq t< p} e^{  \frac{i 2\pi(\tau^{2n+1}-u)t}{p}}\nonumber
		\\[.3cm]&=\mathfrak{m}_0(p,x)+\mathfrak{m}_1(p,x).
	\end{align}
	Rewriting the real number $\pi(\tau^{2n+1}-u)\geq p^{1-\delta}/\pi x$ in the form 
	\begin{equation}\label{eq9966SD.300QN10i}
		\frac{\pi (\tau^{2n+1}- u)}{p} =	\frac{\pi(m_{n,u}p+k_{n,u}}{p}\equiv \frac{\pi  k_{n,u}}{p} \bmod \pi,
	\end{equation}
	the inner sum in $\mathfrak{m}_0(p,x)$ reduces to
	\begin{align}\label{eq9966SD.300QN1f}
		\frac{1}{p}\sum _{1\leq t< p/2} e^{  \frac{i 2\pi(\tau^{2n+1}-u)t}{p}}&\leq \frac{2}{p|\sin \pi(\tau^{2n+1}-u)/p|}\nonumber\\[.3cm]
		&\leq \frac{2}{p|\sin \pi(mp+k_{n,u})/p|}\nonumber\\[.3cm]
		&\leq \frac{2}{p|\sin \pi k_{n,u}/p|}.
	\end{align}	
	Here the hypothesis $\tau^{2n+1}-u\ne0$ excludes the existence of any poles. Substituting this and simplifying the sine function approximation lead to the upper bound	\begin{align}\label{eq9966SD.300QN11k}
		\mathfrak{m}_0(p,x)&=\sum _{2 \leq u\leq x,}\sum _{\substack{1\leq n< p/2\\|\pi(\tau^{2n+1}-u)|\geq p^{1-\delta}/\pi x}}  \frac{1}{p}\sum _{1\leq t< p/2} e^{  \frac{i 2\pi(\tau^{2n+1}-u)t}{p}}\nonumber\\[.3cm]
		&=\frac{1}{p}	\sum _{2 \leq u\leq x}\sum _{\substack{1\leq n< p/2\\\tau^{2n+1}\geq p^{1-\delta}/\pi x}}\frac{2}{p|\sin \pi k_{n,u}/p|} \nonumber\\[.3cm]
		&\asymp	\sum _{2 \leq u\leq x}	\sum _{p^{1-\delta}/\pi x \leq k_{n,u}\leq p}	 \frac{1}{\pi k_{n,u}}\nonumber\\[.3cm]
		&\ll\sum _{2 \leq u\leq x}\left( \log p- \log p^{1-\delta}/\pi x\right)  \nonumber\\[.3cm]
		&\ll x\log p^{\delta} \nonumber\\[.3cm]
		&\ll \frac{x}{\log p},
	\end{align}
	where $\delta=1/(\log p)^2$. Similar calculations yield
	\begin{eqnarray}\label{eq9966SD.300QN11p}
		\mathfrak{m}_1(p,x)&=&   \sum _{2 \leq u\leq x,}\sum _{\substack{1\leq n< p/2\\|\pi(\tau^{2n+1}-u)|\geq p^{1-\delta}/\pi x}}  \frac{1}{p}\sum _{p/2\leq t< p} e^{  \frac{i 2\pi(\tau^{2n+1}-u)t}{p}}\nonumber \\[.3cm]
		&=&\frac{1}{p}	\sum _{2 \leq u\leq x}\sum _{\substack{1\leq n< p/2\\\tau^{2n+1}\geq p^{1-\delta}/\pi x}}\frac{2}{p|\sin \pi k_{n,u}/p|} \nonumber\\[.3cm]
		&\ll& \frac{x}{\log p} ,
	\end{eqnarray}
	Summing yields \eqref{eq9966SD.300QN11k} and \eqref{eq9966SD.300QN11p} yield
	\begin{align}
		\mathfrak{m}(p,x)
		&=\mathfrak{m}_0(p,x)+\mathfrak{m}_1(p,x)\nonumber\\[.3cm]
		&\ll \frac{x}{\log p}+\frac{x}{\log p}\nonumber\\[.3cm]
		&\ll\frac{x}{\log p}
	\end{align}	
	as claimed.
\end{proof}

\section{Error Term for Dimension $k=2$} \label{S3377QQ-E2}\hypertarget{S3377QQ-E2}
The proof of the error term for consecutive pairs of quadratic nonresidues in \eqref{eq3377.400m}, (two dimensional) is reduced to the proof for error term for a single element (one dimensional). Roughly speaking, the estimate for the one dimensional case, proved in \hyperlink{S9977QQ-E1}{Section} \ref{S9977QQ-E1}, is employed here to complete the estimate for the two dimensional case. The subsets 
\begin{align}
S_{01}, \; S_{10},\; S_{11} \subset [0,p-1]\times [0,p-1]\}
\end{align}
occurring in the partition of $E(x)$ in \eqref{eq3377DD.350Vb} are defined in \eqref{eq3377.400p}. 

\begin{lem}  \label{lem33377DD.350V}\hypertarget{lem3377DD.350V} Let $p>1$ be a large prime number and let $x=O(p^{\varepsilon})$ be a real number and $\varepsilon>0$ is a small number. Suppose that $u\leq x$ is a quadratic residue, equivalently $\alpha(u)=(1+\eta(u))/2=1$ for all $u\leq x$. Then 
	\begin{eqnarray}\label{eq3377DD.350Vb}
E(x)&=&\sum _{2 \leq u\leq x,}\sum _{1\leq s_0< p/2} \frac{1}{p}\sum _{\substack{0\leq t_0< p\\ t_0=t_1\ne0}} e^{  \frac{\ii 2\pi(\tau^{2s_0+1}-u)t_0}{p}}\times  \sum _{1\leq s_1< p/2} \frac{1}{p}\sum _{0\leq t_1< p} e^{  \frac{\ii 2\pi(\tau^{2s_1+1}-u-1)t_1}{p}} \nonumber\\[.3cm] 
&=&O\left ((\log p)(\log x)+\frac{x}{\log p}\right )^2\nonumber, 
	\end{eqnarray} 
	where the notation $s_1=s_2\ne0$ denotes the exclusion of the point $(s_2,s_1)=(0,0)$.
\end{lem}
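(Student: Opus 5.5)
The plan is to reduce the two-dimensional sum $E(x)$ to the one-dimensional estimate of Lemma \ref{lem9966.300QN}, applied once to the shift $u$ and once to the shift $u+1$. For each $u\in[2,x]$ write
\begin{equation*}
A(u)=\sum_{1\leq s_0<p/2}\frac{1}{p}\sum_{1\leq t_0<p} e^{\frac{\ii 2\pi(\tau^{2s_0+1}-u)t_0}{p}},\qquad
B(u)=\sum_{1\leq s_1<p/2}\frac{1}{p}\sum_{1\leq t_1<p} e^{\frac{\ii 2\pi(\tau^{2s_1+1}-u-1)t_1}{p}}.
\end{equation*}
Here $A(u)$ and $B(u)$ are the nonzero-frequency parts of the two inner sums, so each factor splits as (zero frequency) $+$ (nonzero frequency). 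Expanding the product over the partition $S_{00},S_{01},S_{10},S_{11}$ of $(t_0,t_1)$, the piece $S_{00}$ is the main term $M(x)$ of Lemma \ref{lem3377.300T}. It is excluded from $E(x)$ by the condition $(t_0,t_1)\neq(0,0)$. What remains is
\begin{equation*}
E(x)=\sum_{2\leq u\leq x}\Big(\tfrac{1}{2}A(u)+\tfrac{1}{2}B(u)+A(u)B(u)\Big)+O(1),
\end{equation*}
where the factor $\tfrac12+O(1/p)$ is the zero-frequency term $\sum_{s<p/2}1/p$.

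\emph{Hypotheses.} The assumption that every $u\leq x$ is a quadratic residue gives $\tau^{2s+1}\neq u$ for all $s$. Applied to $u+1\leq x+1$ (after enlarging $x$ by one), it gives the same for $u+1$. So the nonvanishing hypotheses of Lemmas \ref{lem9966.300QN}, \ref{lem9966.300QN-0} and \ref{lem9966.300QN-1} hold for both shifts, and no poles occur in the geometric-series evaluation.

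\emph{Linear terms.} The sum $\sum_u A(u)$ is exactly the sum estimated in Lemma \ref{lem9966.300QN}, so it is $\ll(\log p)(\log x)+x/\log p$. The sum $\sum_u B(u)$ is the same sum with $u$ replaced by $u+1\in[3,x+1]$. The fiber bound of Lemma \ref{lem9955P.300S} is unaffected by this shift, so it obeys the same bound. Both linear contributions are therefore dominated by the square of this quantity.

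\emph{Bilinear term.} For $\sum_u A(u)B(u)$ I would use
\begin{equation*}
\Big|\sum_u A(u)B(u)\Big|\leq\Big(\sum_u|A(u)|\Big)\Big(\sum_u|B(u)|\Big).
\end{equation*}
The point is that the proofs of Lemmas \ref{lem9966.300QN-0} and \ref{lem9966.300QN-1} actually bound absolute values term by term, via $|\text{geometric sum}|\leq 2/|\sin(\pi k/p)|$ followed by the major/minor arc splitting. Hence they bound $\sum_u|A(u)|$, and likewise $\sum_u|B(u)|$, by $(\log p)(\log x)+x/\log p$. Multiplying the two bounds gives the stated $O\big((\log p)(\log x)+x/\log p\big)^2$.

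\emph{Main obstacle.} The step I expect to be hardest is justifying that the one-dimensional bound holds for $\sum_u|A(u)|$ rather than only for the signed sum. Concretely, one must check that the passage in the major arc from pairs $(n,u)$ to pairs $(a,b)$ through Lemma \ref{lem9955P.300S} survives taking absolute values inside the $u$-sum, and that the minor-arc choice $\delta=1/(\log p)^2$ yields $x/\log p$ for the absolute sum. I would first try to re-run those two proofs verbatim with absolute values, noting where the residue $k_{n,u}$ enters and that its fibers have size at most $x$. If this fails, the fallback is Cauchy--Schwarz, $\sum_u|A(u)B(u)|\leq(\sum_u|A(u)|^2)^{1/2}(\sum_u|B(u)|^2)^{1/2}$, together with a second-moment version of the same sine estimates.
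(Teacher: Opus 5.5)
Your expansion $E(x)=\sum_{2\le u\le x}\bigl(cA(u)+cB(u)+A(u)B(u)\bigr)$ with $c=\frac{p-1}{2p}$ is the correct bookkeeping. The three ranges in Lemmas~\ref{lem33377DD.350A}--\ref{lem33377DD.350C} all contain $[1,p)^2$, so the paper counts $\sum_u AB$ three times. Your inequality $|\sum_u AB|\le(\sum_u|A|)(\sum_u|B|)$ is also valid, unlike the paper's unjustified factorization $|\sum_u AB|\le|\sum_u A|\,|\sum_u B|$.

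However, the input your route needs, $\sum_u|A(u)|\ll(\log p)(\log x)+x/\log p$, is false. The step you flagged as the main obstacle is therefore fatal.
\begin{itemize}
\item For $p\nmid a$ one has exactly $\sum_{1\le t<p}e^{i2\pi at/p}=-1$, and the sum is $p-1$ if $p\mid a$. Since $\tau^{2s+1}$, $1\le s<p/2$, runs through the nonresidues exactly once, you get identically $A(u)=\varkappa(u)-\frac{p-1}{2p}$ and $B(u)=\varkappa(u+1)-\frac{p-1}{2p}$.
\item Hence $|A(u)|\ge\frac{p-1}{2p}$ for every $u$, and $\sum_{2\le u\le x}|A(u)|\ge\frac{p-1}{2p}(\lfloor x\rfloor-1)\sim x/2$. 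This exceeds $(\log p)(\log x)+x/\log p$ once $x\gg(\log p)\log\log p$.
\item No term-by-term absolute bound can do better, since each $n$-term has modulus at least $1/p$. In fact the proof of Lemma~\ref{lem9966.300QN-1} drops a term: its harmonic sum $\sum_{p^{1-\delta}/\pi x\le k\le p}1/k$ equals $\delta\log p+\log(\pi x)+O(1)$, not $\delta\log p$. So that argument gives only $O(x\log x)$ for the minor arc.
\item Even with correct inputs, $(\sum_u|A|)(\sum_u|B|)\asymp x^2$ overshoots the target, which is $O(x^2/(\log p)^2)$ when $x\ge(\log p)^2\log x$.
\item Your linear step is unsound as well. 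Under the hypothesis, $\sum_u A(u)=-\frac{p-1}{2p}(\lfloor x\rfloor-1)$ exactly. So Lemma~\ref{lem9966.300QN} as stated would make the least quadratic nonresidue $O((\log p)\log\log p)$ unconditionally, far beyond what is known (even GRH gives only $O((\log p)^2)$).
\end{itemize}

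The statement itself is true, but for a trivial reason that makes every exponential-sum input unnecessary. Since $2\le u<u+1<p$, we have $\varkappa(u)\varkappa(u+1)\in\{0,1\}$ and $0\le c^2<\frac14$. Hence $|E(x)|=\bigl|\sum_{2\le u\le x}(\varkappa(u)\varkappa(u+1)-c^2)\bigr|\le x$, while by AM--GM
\[
\Bigl((\log p)(\log x)+\frac{x}{\log p}\Bigr)^2\ge 4(\log p)(\log x)\cdot\frac{x}{\log p}=4x\log x\ge x\qquad(x\ge 2).
\]
So the lemma holds with implied constant $1$ and without its hypothesis. Your Cauchy--Schwarz fallback also works, but only by inserting the exact bound $|A(u)|,|B(u)|\le\frac{p+1}{2p}$, which gives $|\sum_u AB|\le x/4+O(1)$; a second-moment sine estimate is not the right tool.

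This also shows the lemma is weaker than trivial. Under its hypothesis $E(x)=-(\lfloor x\rfloor-1)c^2\approx-x/4$. For $x\asymp(\log p)^2(\log\log p)^3$ the term $(x/\log p)^2$ exceeds $M(x)\approx x/4$, so this bound cannot produce the contradiction sought in Theorem~\ref{thm3377QN.850A}.
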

\begin{proof}[\textbf{Proof}]The error term is rewritten as a sum of three suberror terms. Summing the estimates of these subterms computed in \hyperlink{lem3377DD.350A}{Lemma} \ref{lem33377DD.350A} to \hyperlink{lem3377DD.350C}{Lemma} \ref{lem33377DD.350C} yields
	\begin{eqnarray}\label{eq3377DD.350Vd}
E(x)&=&E(x,S_{01})\;+\; E(x,S_{10})\;+\; E(x,S_{11})\\[.3cm] 
&\ll&0\;+\;0\;+\;\left ((\log p)(\log x)+\frac{x}{\log p}\right )^2 \nonumber\\[.3cm] 
&\ll&\left ((\log p)(\log x)+\frac{x}{\log p}\right )^2\nonumber. 
	\end{eqnarray} 
	This completes the estimate of the error term.
\end{proof}
\begin{lem}  \label{lem33377DD.350A}\hypertarget{lem3377DD.350A} Let $p>1$ be a large prime number and let $x=O(p^{\varepsilon})$ be a real number and $\varepsilon>0$ is a small number. Suppose that $u\leq x$ is a quadratic residue, equivalently $\alpha(u)=(1+\eta(u))/2=1$ for all $u\leq x$. Then 
	\begin{align}\label{eq3377DD.350Ab}
E(x,S_{01})&=	\sum _{2 \leq u\leq x,}\sum _{1\leq s_0< p/2} \frac{1}{p}\sum _{0\leq t_0< p} e^{  \frac{\ii 2\pi(\tau^{2s_0+1}-u)t_0}{p}}  \times  \sum _{1\leq s_1< p/2} \frac{1}{p}\sum _{1\leq t_1< p} e^{  \frac{\ii 2\pi(\tau^{2s_1+1}-u-1)t_1}{p}}\nonumber\\[.3cm] 
&=0\nonumber. 
	\end{align} 
\end{lem}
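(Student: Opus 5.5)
The plan is to show that the first factor vanishes identically for every $u$ in the range. Then each summand of $E(x,S_{01})$ is $0$, whatever the second factor (the $s_1,t_1$ sum) happens to be. No cancellation between different $u$ is needed, and no estimate of the second factor is needed.

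First, fix $u\in[2,x]$ and $s_0$ with $1\leq s_0<p/2$. By the hypothesis, $u$ is a quadratic residue, so $\log_\tau(u)$ is even. The element $\tau^{2s_0+1}$ has odd discrete logarithm, hence it is a quadratic nonresidue. Since $p-1$ is even, parity of the exponent is well defined modulo $p-1$. Therefore $a:=\tau^{2s_0+1}-u\not\equiv 0\bmod p$. This is the same observation used in the proof of Lemma \ref{lem9911Q.200A}(ii).

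Second, I evaluate the complete inner sum over $0\leq t_0<p$ by the geometric series formula with ratio $r=e^{\ii 2\pi a/p}\neq 1$. This gives
\begin{equation*}
\frac{1}{p}\sum_{0\leq t_0<p} e^{\frac{\ii 2\pi(\tau^{2s_0+1}-u)t_0}{p}}=\frac{1}{p}\cdot\frac{r^p-1}{r-1}=0 .
\end{equation*}
Summing over $s_0$, the whole first factor equals $0$ for each $u$. This is consistent with Lemma \ref{lem9911Q.200A}(ii): the first factor is at most the indicator $\varkappa(u)$, which vanishes for residues. I argue termwise instead of citing the lemma directly, to avoid the small mismatch between the index ranges $0\leq n<p/2$ and $1\leq s_0<p/2$.

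Third, multiply by the second factor. It is a finite sum of bounded terms, at most $p/2$ summands each of modulus at most $1$, so the product is $0$ for every $u$. Summing over $2\leq u\leq x$ gives $E(x,S_{01})=0$.

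The only step requiring any care is the first one. One has to use that the $t_0$-sum is the \emph{full} sum including $t_0=0$, which is exactly what defines the piece $S_{01}$, so that the geometric series closes up to $0$. One also needs the non-vanishing $\tau^{2s_0+1}\neq u$, which is where the hypothesis that every $u\leq x$ is a residue enters. Everything else is immediate.
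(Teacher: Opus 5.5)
Your proof is correct and follows the paper's argument: the complete $t_0$-sum vanishes by the geometric series because $\tau^{2s_0+1}\neq u$, so every summand is $0$ whatever the $t_1$-factor is. Your justification of the non-vanishing is cleaner than the paper's. The paper cites the condition $\tau^{2s_0+1}-u-1\neq 0$, which does not match the exponent actually appearing in the $t_0$-sum. You instead derive $\tau^{2s_0+1}-u\neq 0$ directly from the residue hypothesis by a parity-of-discrete-logarithm argument.
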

\begin{proof}[\textbf{Proof}] Substitute $\alpha(u)=1$ and use the hypothesis $\tau ^{2s_0+1}-u-1\ne0$ for $u\leq x$ to evaluate the double sum indexed by $t_0$, that is,
\begin{equation}
\sum_{ 0\leq t_0\leq p-1} e^{\frac{i2\pi \left(\tau ^{2s_0+1}-u-1\right)t_0}{p}}=0.
\end{equation}
This step yields
\begin{align}\label{eq3377DD.350Ad}
E(x,S_{01})&=\sum _{2 \leq u\leq x,}\sum _{1\leq s_0< p/2} \frac{1}{p}\cdot 0 \cdot \frac{1}{(t_0+1)\log ( s_0+1)} \times  \sum _{1\leq s_1< p/2} \frac{1}{p}\sum _{1\leq t_1< p} e^{  \frac{\ii 2\pi(\tau^{2s_1+1}-u-1)t_1}{p}}\nonumber \\[.3cm] 
&=0.
\end{align}
\end{proof}

\begin{lem}  \label{lem33377DD.350B}\hypertarget{lem3377DD.350B} Let $p>1$ be a large prime number and let $x=O(p^{\varepsilon})$ be a real number and $\varepsilon>0$ is a small number. Suppose that $u\leq x$ is a quadratic residue, equivalently $\alpha(u)=(1+\eta(u))/2=1$ for all $u\leq x$. Then 
	\begin{align}\label{eq3377DD.350Bb}
	E(x,S_{01})&=	\sum _{2 \leq u\leq x,}\sum _{1\leq s_0< p/2} \frac{1}{p}\sum _{1\leq t_0< p} e^{  \frac{\ii 2\pi(\tau^{2s_0+1}-u)t_0}{p}}  \times  \sum _{1\leq s_1< p/2} \frac{1}{p}\sum _{0\leq t_1< p} e^{  \frac{\ii 2\pi(\tau^{2s_1+1}-u-1)t_1}{p}} \nonumber\\[.3cm] 
	&=0\nonumber. 
\end{align} 
\end{lem}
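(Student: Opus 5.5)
The plan mirrors the argument used for Lemma \ref{lem33377DD.350A}, with the roles of the two factors exchanged. The only difference between $S_{10}$ and $S_{01}$ is which of the two frequency variables runs over its complete range $0\leq t<p$. In the displayed sum the complete range is on $t_1$, attached to the shifted element $u+1$. The restricted range $1\leq t_0<p$ is on the factor attached to $u$. So I would first evaluate the complete inner sum over $t_1$ exactly, by orthogonality of additive characters in $\F_p$, exactly as in the proof of Lemma \ref{lem9911Q.200A}:
\begin{equation*}
\frac{1}{p}\sum_{0\leq t_1<p} e^{\frac{\ii 2\pi(\tau^{2s_1+1}-u-1)t_1}{p}}
=\begin{cases} 1 & \text{if } \tau^{2s_1+1}\equiv u+1 \bmod p,\\ 0 & \text{otherwise.}\end{cases}
\end{equation*}
Summing over $1\leq s_1<p/2$ then gives the indicator $\varkappa(u+1)$ of Lemma \ref{lem9911Q.200A}(ii), up to the boundary term $s_1=0$. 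That boundary term is harmless, since it can only contribute when $u+1\equiv\tau \bmod p$.

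The second step is to show that this indicator vanishes for every $u\in[2,x]$. The odd powers $\tau^{2s_1+1}$ run exactly over the quadratic nonresidues. So the congruence $\tau^{2s_1+1}\equiv u+1$ has no solution as soon as $u+1$ is a quadratic residue. This is the counterpart of the step in Lemma \ref{lem33377DD.350A}, where the hypothesis $\tau^{2s+1}-u-1\ne 0$ was used to kill the complete character sum. Once the $t_1$-factor is identically $0$ for each $u$ and each $s_1$, the whole triple product vanishes termwise. The $t_0$-factor is bounded independently of this, since it is a finite sum, so it plays no role, and $E(x,S_{10})=0$ follows.

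The main obstacle is the hypothesis. As written, the lemma assumes that every $u\leq x$ is a quadratic residue, but the vanishing argument needs $u+1$, for all $2\leq u\leq x$, to be a residue. Equivalently, it needs $\tau^{2s_1+1}-u-1\not\equiv 0$ for all such $u$ and $s_1$.

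\emph{I would first try} to read the standing assumption as covering all integers in $[2,x+1]$, which is what the proof of Lemma \ref{lem33377DD.350A} implicitly uses. With that reading the argument above is complete. I should flag, however, that the natural contrapositive assumption for Theorem \ref{thm3377QN.850A} is weaker. It only says there is no pair $u,u+1$ of consecutive nonresidues, that is, $\varkappa(u)\varkappa(u+1)=0$ for each $u$. That does not force each factor to vanish separately. Under only that assumption, the termwise vanishing claimed here, and in Lemma \ref{lem33377DD.350A}, would not follow.
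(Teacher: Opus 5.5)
Your proposal is correct and is essentially the paper's proof. The complete $t_1$-sum is evaluated by orthogonality, and it vanishes for every $s_1$ because $u+1$ is a residue while every $\tau^{2s_1+1}$ is a nonresidue, so each product term is zero. (Your $s_1=0$ boundary worry is moot: $s_1=(p-1)/2$ already gives $\tau^{p}=\tau$, and in any case the vanishing holds for every odd exponent.)

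Your caveat is accurate and sharper than the paper, whose proof invokes $\tau^{2s_1+1}-u\neq 0$ even though the $t_1$-sum involves $u+1$. Under the literal hypothesis, the term $u=\lfloor x\rfloor$ equals $-\tfrac{p-1}{2p}$ whenever $\lfloor x\rfloor+1$ is a nonresidue. So the hypothesis must indeed cover $[2,x+1]$, and it is much stronger than the mere absence of a consecutive nonresidue pair used in the main theorem.
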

\begin{proof}[\textbf{Proof}] Substitute $\alpha(u)=1$ and use the hypothesis $\tau ^{2s_1+1}-u\ne0$ for $u\leq x$ to evaluate the double sum indexed by $t_1$, that is,
\begin{equation}
\sum_{ 0\leq t_1\leq p-1} e^{\frac{i2pi \left(\tau ^{2s_1+1}-u\right)t_1}{p}}=0.
\end{equation}
This step yields
	\begin{align}\label{eq3377DD.350Bd}
	E(x,S_{01})&=	\sum _{2 \leq u\leq x,}\sum _{1\leq s_0< p/2} \frac{1}{p}\sum _{1\leq t_0< p} e^{  \frac{\ii 2\pi(\tau^{2s_0+1}-u)t_0}{p}} \times  \sum _{1\leq s_1< p/2} \frac{1}{p}\cdot 0 \nonumber\\[.3cm] 
	&=0. 
\end{align} 
\end{proof}
\begin{lem}  \label{lem33377DD.350C}\hypertarget{lem3377DD.350C} Let $p>1$ be a large prime number and let $x=O(p^{\varepsilon})$ be a real number, where $\varepsilon>0$ is a small number. Suppose that $u\leq x$ is a quadratic residue, equivalently $\alpha(u)=(1+\eta(u))/2=1$ for all $u\leq x$. Then 
\begin{eqnarray}	\label{eq3377DD.350Cb}
E(x,S_{11})&=&		\sum _{2 \leq u\leq x,}\sum _{1\leq s_0< p/2} \frac{1}{p}\sum _{1\leq t_0< p} e^{  \frac{\ii 2\pi(\tau^{2s_0+1}-u)t_0}{p}} \sum _{1\leq s_1< p/2} \frac{1}{p}\sum _{1\leq t_1< p} e^{  \frac{\ii 2\pi(\tau^{2s_1+1}-u-1)t_1}{p}} \nonumber\\[.3cm]
&=&O\left ((\log p)(\log x)+\frac{x}{\log p}\right )^2\nonumber.
\end{eqnarray}
\end{lem}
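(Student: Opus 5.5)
The plan is to reduce the two-dimensional sum $E(x,S_{11})$ to two copies of the one-dimensional error term treated in Section \ref{S9977QQ-E1}. For each fixed $u\in[2,x]$ define
\begin{equation*}
F_0(u)=\sum _{1\leq s_0< p/2} \frac{1}{p}\sum _{1\leq t_0< p} e^{  \frac{\ii 2\pi(\tau^{2s_0+1}-u)t_0}{p}},\qquad
F_1(u)=\sum _{1\leq s_1< p/2} \frac{1}{p}\sum _{1\leq t_1< p} e^{  \frac{\ii 2\pi(\tau^{2s_1+1}-u-1)t_1}{p}},
\end{equation*}
so that $E(x,S_{11})=\sum_{2\leq u\leq x}F_0(u)F_1(u)$. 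The standing hypothesis (no consecutive nonresidue pair up to $x$) gives, for every $u\leq x$, that at least one of $\tau^{2s_0+1}-u$ and $\tau^{2s_1+1}-u-1$ never vanishes. I would also assume the nonvanishing for both shifts, as in Lemma \ref{lem9966.300QN}. Then no poles occur in the geometric-series evaluation of either inner $t$-sum.

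\textbf{Step 1.} Apply the triangle inequality and the elementary bound $\sum_u |a_u b_u|\leq \left(\sum_u|a_u|\right)\left(\sum_u|b_u|\right)$ for nonnegative terms. This gives
\begin{equation*}
|E(x,S_{11})|\leq \Big(\sum_{2\leq u\leq x}|F_0(u)|\Big)\Big(\sum_{2\leq u\leq x}|F_1(u)|\Big).
\end{equation*}

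\textbf{Step 2.} Bound each factor by $(\log p)(\log x)+x/\log p$, reusing the proof of Lemma \ref{lem9966.300QN}. For the factor with $u$, split the $s_0$-sum into the major arc $|\pi(\tau^{2s_0+1}-u)|<p^{1-\delta}/\pi x$ and its complement. Evaluate the $t_0$-sum, split at $p/2$, by the geometric series, giving the bound $2/(p|\sin\pi k/p|)$. On the major arc, use the fiber count of Lemma \ref{lem9955P.300S} to pass to $\sum_{a\leq x}\sum_{b}1/(ab)$, as in Lemma \ref{lem9966.300QN-0}. On the minor arc, use the harmonic tail with $\delta=1/(\log p)^2$, as in Lemma \ref{lem9966.300QN-1}. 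The factor with $u+1$ is identical after the shift $u\mapsto u+1$, which only changes the range $[2,x]$ to $[3,x+1]$ and does not affect the fiber bound. Multiplying the two bounds gives the claimed $O\big((\log p)(\log x)+x/\log p\big)^2$.

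\textbf{Main obstacle.} Step 2 needs the bounds of Lemmas \ref{lem9966.300QN-0} and \ref{lem9966.300QN-1} for $\sum_u|F_0(u)|$, with the absolute value taken per $u$, and not merely for the signed sum. I would first check that those proofs used only absolute-value (sine) bounds, so they transfer verbatim. There is, however, a real tension to confront here. By Lemma \ref{lem9911Q.200A}, and whenever $\tau^{2s_0+1}\ne u$ for all $s_0$ (i.e.\ $u$ is a quadratic residue, as in the hypothesis of the lemma), one has exactly $F_0(u)=\varkappa(u)-\#\{s_0\}/p=-\tfrac12+O(1/p)$, so $|F_0(u)|\asymp 1$ for each such $u$. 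Similarly $F_1(u)=-\tfrac12+O(1/p)$ whenever $u+1$ is a quadratic residue. For these $u$ the per-$u$ bound cannot be $o(1)$ on average; in particular, when both $u$ and $u+1$ are residues, $F_0(u)F_1(u)=\tfrac14+O(1/p)$. So the step that must be scrutinized is whether the sine-sum estimate genuinely captures this cancellation. Concretely, the $O(1/p)$ contribution per $s_0$, summed over $\asymp p$ values of $s_0$, produces a constant that the harmonic-sum bounds do not see. I would therefore verify Step 2 against this exact evaluation before relying on it.
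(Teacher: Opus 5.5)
Your proposal has a genuine gap, and the tension you raise in your last paragraph refutes the plan rather than being something to check. Since $2s+1$ with $1\le s<p/2$ runs over the odd residues modulo $p-1$ exactly once, every $u$ satisfies $F_0(u)=\varkappa(u)-\frac{p-1}{2p}$ and $F_1(u)=\varkappa(u+1)-\frac{p-1}{2p}$ exactly. Hence $\frac12-\frac1{2p}\le|F_0(u)|\le\frac12+\frac1{2p}$, and so $\sum_{2\le u\le x}|F_0(u)|\asymp x$.

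\textbf{Step 2 fails.} A sum of size $\asymp x$ is not $O\bigl((\log p)(\log x)+x/\log p\bigr)$ once $x/((\log p)\log x)\to\infty$, whatever sine or fiber estimates you use. The signed bound of Lemma \ref{lem9966.300QN} is no safer to reuse. Under its hypothesis the sum equals $-(\lfloor x\rfloor-1)\frac{p-1}{2p}$, so that lemma would force the least quadratic nonresidue to be $O((\log p)\log\log p)$, which is far beyond anything known, even under GRH.

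\textbf{Step 1 also loses too much.} Your inequality $\sum_u|a_ub_u|\le(\sum_u|a_u|)(\sum_u|b_u|)$ is valid. But both factor sums are $\asymp x$, so the product is $\asymp x^2$. In the same range this exceeds $\bigl((\log p)(\log x)+x/\log p\bigr)^2$ by an unbounded factor; at the theorem's choice $x\asymp(\log p)^2(\log\log p)^3$ the excess is a factor $\asymp(\log p)^2$. Decoupling the $u$-sum costs a factor of $x$ that nothing recovers.

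The paper's proof takes the same decoupling route, and in a worse form. It uses $|\sum_u a_ub_u|\le|\sum_u a_u|\,|\sum_u b_u|$, which is false for signed sums. It also replaces $u+1$ by $u$ in the second factor and then cites Lemma \ref{lem9966.300QN}. So it has the same defect.

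\textbf{The fix: do not separate the two factors.} The exact evaluation gives $|F_0(u)F_1(u)|\le\bigl(\frac12+\frac1{2p}\bigr)^2<1$ for each $u$, hence $|E(x,S_{11})|\le x$. The right-hand side of the lemma dominates $x$ precisely because it is squared:
\begin{itemize}
\item if $x\ge(\log p)^2$, then $(x/\log p)^2\ge x$;
\item if $3\le x<(\log p)^2$, then $\bigl((\log p)\log x\bigr)^2\ge(\log p)^2>x$;
\item if $x<3$, there is at most one term, and it has size $<1$.
\end{itemize}
This proves the statement, and it does not even use the hypothesis.

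Nothing sharper is true. Under the hypothesis, $F_0(u)F_1(u)=\bigl(\frac{p-1}{2p}\bigr)^2$ for $2\le u\le x-1$, so $E(x,S_{11})=\frac{x}{4}+O(1)$. The lemma holds only because its bound is no stronger than the trivial bound $x$.
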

\begin{proof}[\textbf{Proof}]To compute a nontrivial upper bound, substitute $\alpha(u)=1$ and observe that $E(x,S_{11})$ is a function of $x$ and $p$, which is independent of $u$. Consequently,
\begin{equation}
|E(x,S_{11})|\leq \Bigg |\sum _{2 \leq u\leq x}E(x,S_{11})\Bigg |.
\end{equation}
Next, replacing $\psi(s)=e^{i 2 \pi ks/p}$ yields
\begin{eqnarray}\label{eq3377DD.350Cd}
|E(x,S_{11})|&\leq &\Bigg |\sum _{2 \leq u\leq x}E(x,S_{11})\Bigg | \\[.3cm] &\leq &		\Bigg |\sum _{2 \leq u\leq x,}\sum _{1\leq s_0< p/2} \frac{1}{p}\sum _{\substack{1\leq t_0< p\\ t_0=t_1\ne0}} e^{  \frac{\ii 2\pi(\tau^{2s_0+1}-u)t_0}{p}} \nonumber\\[.3cm] 
&&\hskip 1.25in \times  \sum _{1\leq s_1< p/2} \frac{1}{p}\sum _{1\leq t_1< p} e^{  \frac{\ii 2\pi(\tau^{2s_1+1}-u-1)t_1}{p}}\Bigg |\nonumber\\[.3cm]
&\leq &		\Bigg |\sum _{2 \leq u\leq x}\frac{1}{p}\sum _{1\leq s_0< p/2}  \sum_{ 1\leq t_0\leq p-1} e^{\frac{i2 \pi  \left(\tau ^{2s_0+1}-u\right)t_0}{p}} \Bigg | \nonumber  \\[.3cm] 
&&\hskip 1.25002 in \times \Bigg |\sum _{2 \leq u\leq x}\frac{1}{p}\sum _{1\leq s_1< p/2} \sum_{ 1\leq t_1\leq p-1} e^{\frac{i2 \pi  \left(\tau ^{2s_1+1}-u\right)t_1}{p}} \Bigg |\nonumber.
\end{eqnarray}
Applying \hyperlink{lem9966.300QN}{Lemma} \ref{lem9966.300QN} to the two independent triple sums yields
\begin{eqnarray}\label{eq3377DD.350Ce}
|E(x,S_{11})|&\ll&	\left ((\log p)(\log x)+\frac{x}{\log p}\right )\;\times 	\left ((\log p)(\log x)+\frac{x}{\log p}\right )\nonumber \\[.3cm] 
&\ll&\left ((\log p)(\log x)+\frac{x}{\log p}\right )^2\nonumber.
\end{eqnarray}
\end{proof}
\section{Smallest Pairs of Consecutive Quadratic Nonresidues} \label{S3377QN-T}\hypertarget{S3377QN-T}
Detection of quadratic nonresidues in the finite field $\mathbb{F}_p$ is achieved through the characteristic function 
\begin{align}\label{eq3377.400d}
	\varkappa_{w}(u)&=\sum _{1\leq s< p/2} \frac{1}{p}\sum _{0\leq t< p} e^{  \frac{i 2\pi(\tau^{2s}-u)t}{p}}\\[.3cm]
	&	=\left \{
	\begin{array}{ll}
		1 & \text{   \normalfont if } \log_{\tau} (u)=2s, \\[.2cm]
		0 & \text{   \normalfont if }\log_{\tau} (u)=2s+1,
	\end{array} \right .\nonumber.
\end{align}
developed in \hyperlink{lem9911Q.200A}{Lemma} \ref{lem9911Q.200A}. 

\begin{proof}[\textbf{Proof}] (\hyperlink{thm3377QN.850A}{Theorem} \ref{thm3377QN.850A}) Let $p>2$ be a large prime number and let $x\geq c(\log p)(\log \log p)$ be a real number, where $c>0$ is a constant and $p\geq p_0=p_0(c)$. The proof shows that the interval $[2,x]$ contains at least $c(\log \log p)$ consecutive quadratic nonresidues $u,u+1\in (x,p)$. To derive a contradiction suppose that $u>x$ (equivalently $\alpha(u)=1$) and the sum of the weighted characteristic function over the interval $[2,x]$ is bounded by a small constant
\begin{equation} \label{eq3377.400l}
		N_2(x)=\sum _{2 \leq u\leq x} \varkappa (u)\varkappa(u+1)=0
	\end{equation}
for all sufficiently large prime $p$. Replacing characteristic function , \eqref{eq3377.400d}, and expanding the nonexistence equation \eqref{eq3377.400l} yield
	\begin{eqnarray} \label{eq3377.400m}
		N_2(x)&=&\sum _{2 \leq u\leq x} 	\varkappa (u)	\varkappa(u+1)  \\[.3cm]
		&=&\sum _{2 \leq u\leq x,}\sum _{1\leq s_0< p/2} \frac{1}{p}\sum _{0\leq t_0< p} e^{  \frac{\ii 2\pi(\tau^{2s_0+1}-u)t_0}{p}}  
\times  \sum _{1\leq s_1< p/2} \frac{1}{p}\sum _{0\leq t_1< p} e^{  \frac{\ii 2\pi(\tau^{2s_1+1}-u-1)t_1}{p}}\nonumber\\[.3cm] 
		&=&\sum _{2 \leq u\leq x,}\sum _{1\leq s_0< p/2} \frac{1}{p}\times\sum _{1\leq s_1< p/2} \frac{1}{p} +\sum _{2 \leq u\leq x,}\sum _{1\leq s_0< p/2} \frac{1}{p}\sum _{\substack{0\leq t_0< p\\ t_0=t_1\ne0}} e^{  \frac{\ii 2\pi(\tau^{2s_0+1}-u)t_0}{p}}  \nonumber\\[.3cm] 
		&&\hskip 1.9800075 in \times  \sum _{1\leq s_1< p/2} \frac{1}{p}\sum _{0\leq t_1< p} e^{  \frac{\ii 2\pi(\tau^{2s_1+1}-u-1)t_1}{p}}\nonumber\\[.3cm] 
		&=&M(x)\; +\; E(x)\nonumber,
	\end{eqnarray} 
the notation $t_0=t_1\ne0$ denotes not both are zero. \\

The domain $[0,p-1]\times [0,p-1]$ is partitioned as a disjoint union of 4 subsets
\begin{align}\label{eq3377.400p}
S_{00}&=\{(0,0)\}\\[.3cm]
S_{01}&=\{(s_2,s_1)\in [0,p-1]\times [1,p-1]\}\\[.3cm]
S_{10}&=\{(s_2,s_1)\in [1,p-1]\times [0,p-1]\}\\[.3cm]
S_{11}&=\{(s_2,s_1)\in [1,p-1]\times [1,p-1]\}. 
\end{align}

The first subset $S_{00}=\{(0,0)\}$ determines the main term 
\begin{equation}
	M(x)=M(x,S_{00})=\frac{x}{4}+O(1),
\end{equation}
the evaluation appears in \hyperlink{lem3377.300T}{Lemma} \ref{lem3377.300T}. The remaining three subsets $S_{s_2,s_1}=\{(s_2,s_1)\ne(0,0)\}$ determines the error term, a nontrivial upper bound 
\begin{equation}
E(x)=E(x, S_{s_2,s_1})=O\left ((\log p)(\log x)+\frac{x}{\log p}\right )^2.
\end{equation} 
is computed in \hyperlink{lem33377DD.350V}{Lemma} \ref{lem33377DD.350V}. Substituting these estimates and the lower bound $x=c(\log p)^2(\log \log p)^3$ yield
\begin{eqnarray} \label{eq9977.400p}
N_2(x)&=&	M(x)\; +\; E(x) \\[.3cm]
&=&\left[\frac{x}{4}+O(1)\right]+\left[O\left ((\log p)(\log x)+\frac{x}{\log p}\right )^2\right]\nonumber\\[.3cm]
&=&\left[\frac{c(\log p)^2(\log \log p)^3}{4}+O(1)  \right]  \nonumber\\[.3cm] 
&&\hskip 0.5700075 in +\left[ O\left(\left [(\log p) (\log[(c\log p)^2(\log \log p)^3])+\frac{(c\log p)^2(\log \log p)^3}{\log p}\right ]^2\right)  \right] \nonumber\\[.3cm]
&=& \frac{c(\log p)^2(\log \log p)^3}{4}\left( 1+O\left( \frac{1}{\log\log p}\right)\right)  \nonumber,
	\end{eqnarray} 
where $c>0$ is a constant and $p\geq p_0=p_0(c)$. Consequently, the main term in \eqref{eq9977.400p} dominates the error term:
	
\begin{eqnarray} \label{eq9977.400v}
N_2(x)&=& \frac{c(\log p)^2(\log \log p)^3}{4}\left( 1+O\left( \frac{1}{\log \log p}\right)\right)	\\[.3cm]	
		&\gg&1 \nonumber
	\end{eqnarray} 
	as $x\to\infty$. Clearly, this contradicts the hypothesis \eqref{eq3377.400l} for all sufficiently large prime numbers $p \geq p_0$. Therefore, there exists a small pair of consecutive quadratic nonresidues \begin{equation}
2\leq u, u+1\leq (\log p)^2(\log \log p)^3
	\end{equation}
for all sufficiently large primes $p$.
\end{proof}

\begin{rmk}{\normalfont The splitting of the composite equation \eqref{eq3377.400m} into main term and error term is a standard technique used in the proofs for various results, see {\color{red}\cite[p.\;863]{ES1957}}, {\color{red}\cite[p.\;402]{PS1995}}, {\color{red}\cite[p.\;9]{JM2000}}, {\color{red}\cite[p.\;25]{KT2020}}, {\color{red}\cite[p. \;7]{KM2020}},  et alii.\\

	}
\end{rmk}
\section{Some Numerical Data}\label{S3377NE}
A few examples were compiled to verify the upper bound and to demonstrate the concept and the expected results.

\begin{exa}{\normalfont Some of the basic parameters for a random 12-digit prime $p$ are listed here. The parameters are these: 

		\begin{enumerate}[font=\normalfont, label=(\alph*)]			
			\item $\displaystyle p = 601,357,763,759,$ \tabto{8cm}prime,
\item $\displaystyle p-1=2\cdot37\cdot8126456267,$ \tabto{8cm}factorization,			
			\item $\displaystyle x=(\log p)^{2}(\log_2 p)\approx2427.84,$\tabto{8cm}upper bound,\\
			\item $\displaystyle N_2(x)\gg(\log \log p )\gg3.30,$\tabto{8cm}predicted lower bound in \eqref{eq9977.400v}.
			
		\end{enumerate}
		\vskip .1 in
under the assumption that the consecutive pairs $u,u+1$ of quadratic nonresidues are independent random variables and uniform distributed over the interval $[1,p-1]$, the expected number of consecutive pairs of quadratic nonresidues in the finite field $\mathbb{F}_p$ is
\begin{align}\label{eq3377NE.120}
N_2(p)&=c_2(p)\cdot\frac{1}{2}\cdot\frac{1}{2}\cdot(\log p)^2(\log \log p)+o((\log p)^2(\log \log p))\\[.2cm]
&= 606.96+\text{ Error}\nonumber,
\end{align}
where $c_2(p)>0$ is a constant (here it is set to $c_2(p)=1$). The actual count is 499 pairs $u,u+1$, where $u\leq 2427.84$, the first 100 are listed on the \autoref{table:t200-1}.

\begin{table}[H]
\setlength{\tabcolsep}{.2513cm}
\renewcommand{\arraystretch}{1.0950}
\setlength{\arrayrulewidth}{.97pt}
\centering
\small
\begin{tabular}{|c|c|c|c|c|c|c|}
	\hline
	(21,22) & (28,29) & (41,42) & (55,56) & (56,57) & (57,58) & (58,59) \\
	\hline
	(62,63) & (66,67) & (87,88) & (109,110) & (112,113) & (113,114) & (118,119) \\
	\hline
	(123,124) & (164,165) & (167,168) & (174,175) & (175,176) & (176,177) & (186,187) \\
	\hline
	(189,190) & (197,198) & (210,211) & (223,224) & (226,227) & (227,228) & (232,233) \\
	\hline
	(246,247) & (247,248) & (252,253) & (263,264) & (273,274) & (274,275) & (279,280) \\
	\hline
	(285,286) & (310,311) & (321,322) & (322,323) & (327,328) & (328,329) & (329,330) \\
	\hline
	(330,331) & (334,335) & (335,336) & (342,343) & (347,348) & (348,349) & (349,350) \\
	\hline
	(371,372) & (372,373) & (373,374) & (377,378) & (378,379) & (379,380) & (396,397) \\
	\hline
	(402,403) & (409,410) & (410,411) & (419,420) & (427,428) & (428,429) & (435,436) \\
	\hline
	(436,437) & (439,440) & (448,449) & (454,455) & (455,456) & (456,457) & (463,464) \\
	\hline
	(464,465) & (465,466) & (472,473) & (475,476) & (482,483) & (491,492) & (492,493) \\
	\hline
	(493,494) & (494,495) & (495,496) & (496,497) & (513,514) & (517,518) & (521,522) \\
	\hline
	(525,526) & (526,527) & (527,528) & (535,536) & (545,546) & (546,547) & (547,548) \\
	\hline
	(557,558) & (560,561) & (565,566) & (566,567) & (569,570) & (579,580) & (580,581) \\
	\hline
	(590,591) & (594,595) & & & & & \\
	\hline
\end{tabular}
\caption{First 100 quadratic nonresidue pairs $(u,\,u+1)$ modulo $p=601357763759$}
\label{table:t200-1}
\end{table}
	}
\end{exa}
\vskip .25 in



\end{document}